\newcommand{\N}{{\mathbb N}}
\newcommand{\Oc}{\mathcal{O}}
\newcommand{\Pp}{\mathrm{P}}
\newcommand{\Dmf}{{\mathfrak{D}}}
\newcommand{\Umf}{{\mathfrak{U}}}
\newcommand{\Vmf}{{\mathfrak{B}}}
\newcommand{\E}{\mathsf{E}}
\newcommand{\R}{{\mathbb R}}
\newcommand{\Cc}{{\mathcal C}}
\newcommand{\eps}{{\varepsilon}}
\newcommand{\e}{{\varepsilon}}
\newcommand{\sgn}{\mathop{\mathrm{sign}}}
\newcommand{\dist}{\mathop{\mathrm{dist}}}
\newcommand{\tBe}{{\tilde\tau_\eps}}
\newtheorem{theorem}{Theorem}[section]
\newtheorem{proposition}{Proposition}[section]
\newtheorem{remark}{Remark}[section]
\newtheorem{lemma}{Lemma}[section]
\newtheorem{corollary}{Corollary}[section]
\numberwithin{equation}{section}
\renewenvironment{proof}[1][Proof]{
{\noindent {\sc #1: }}
}{
{{\hfill $\Box$ \smallskip}}
}
\let\orgdescriptionlabel\descriptionlabel
\renewcommand*{\descriptionlabel}[1]{%
  \let\orglabel\label
  \let\label\@gobble
  \phantomsection
  \edef\@currentlabel{#1}%
  \let\label\orglabel
  \orgdescriptionlabel{#1}%
}
\title{Scaling limit for escapes from unstable equilibria in the vanishing noise limit: nontrivial Jordan block case}
\author{Yuri Bakhtin, Zsolt Pajor-Gyulai}
\institute{Courant Institute of Mathematical Sciences\\
New York University\\
New York, NY, USA\\
}
\begin{document}

\maketitle

\begin{abstract}
We consider white noise perturbations of a nonlinear dynamical system in the neighborhood of an unstable critical point with linearization given by a Jordan block of full dimension. 
For the associated exit problem, we study the joint limiting behavior of the exit location and exit time, in the vanishing noise limit. 
The exit typically happens near one of two special deterministic points associated with
the eigendirection, and we obtain several more terms in the expansion for the exit point. The leading correction term 
is deterministic
and logarithmic in the noise magnitude, while the random remainder satisfies a scaling limit. 
\end{abstract}
\section{Introduction}

Exit problems for random perturbations of dynamical systems form an important classical field
in the theory of stochastic processes. These problems provide a multitude of interesting
questions at the intersection of dynamical systems and stochastic analysis, and are tightly related to asymptotic analysis of 
linear second order parabolic and elliptic equations with a small parameter.

The most celebrated
results for exit problems are large deviation estimates for the exit location and exit time by Freidlin and Wentzell~(see, e.g., \cite{FW2012}), in the case of a domain
containing one or several stable equilibria where the dynamics exhibits metastable behavior.

There are situations though where the analysis at the level of large deviations is not sufficient, and one is forced to study distributional scaling limits for the exit distributions. 
In~\cite{Bak2010} and~\cite{Bak2011}, this kind of analysis was carried out
for diffusions near noisy heteroclinic networks, where multiple hyperbolic critical points (or, {\it saddle points}) of the deterministic dynamics are connected to each other by {\it heteroclinic orbits} (or, {\it connections}). If  small noise is present, a typical trajectory near such a network  spends a long time diffusing near the critical points, where the vector field is weak, eventually deciding between outgoing heteroclinic 
connections and following one of them until it reaches the neighborhood of the next saddle point.
  Consequently, a natural approach based on the strong Markov property was an iterative study of the exit from the neighborhood of the
saddles and the motion along heteroclinic paths. Early results in this direction~\cite{Kifer1981}, \cite{Eizenberg:MR749377}, \cite{Bak2008}, \cite{Day95}, \cite{Mikami1995} established that with high probability, the exit from a neighborhood of an unstable equilibrium happens along the manifold associated with the top eigenvalue $\lambda>0$ of the linearization of the system and that the leading order asymptotics of the exit time is deterministic and is of the order of $\lambda^{-1}\log\e^{-1}$. However, these results were not detailed enough to allow for an efficient iteration scheme.
The necessary refinement of the analysis of the exit distribution was developed in~\cite{Bak2010}, \cite{Bak2011} (see also \cite{AB2011} where a technical no-resonance requirement was lifted for planar systems). This led to the first rigorous mathematical description of non-Markovian limiting effects and other behaviors in such systems despite the existing nonrigorous studies in~\cite{ASK03}, \cite{AS1999}, \cite{SH1990}. For a recent survey on heteroclinic networks, see \cite{Field2015}.

In \cite{Bak2010}, \cite{Bak2011},  and \cite{AB2011}, it was assumed that the top eigenvalues of the linearizations of the system near the critical points were simple. It was obtained then that if one starts near the critical point (or its stable manifold), then in the vanishing noise limit, the exit distribution satisfies a scaling limit theorem with explicitly computed scaling exponent
and limiting distribution.

In this paper, we are interested in a situation where the geometric multiplicity of
the leading eigenvalue $\lambda>0$ is equal to~$1$ and the algebraic multiplicity equals the dimension of the unstable manifold.
 For simplicity, we exclude the presence of the stable manifold, although our analysis carries over to the hyperbolic situation with obvious modifications.   Namely, we consider a vector field in arbitrary dimension, with one fully unstable critical point and linearization given by a matrix whose Jordan form contains exactly one Jordan block of full dimension.
With random initial conditions close to the critical point, we consider 
the small white noise perturbation of this vector field and study the  limiting behavior of the
joint distribution of the exit point and exit time in the limit of vanishing noise. 

Curiously, the limiting behavior is more involved compared to the case of the leading eigenvalue
of algebraic
multiplicity one, where the exit point satisfies a simple limit theorem. Namely, in our setting, we obtain that for small values of the noise magnitude $\eps$, the exit happens near one of two points $q_+$ and $q_-$ associated
with the main direction of the Jordan basis and, near each of $q_{\pm}$, the random exit point $z_\eps$ can
be represented by the following expansion:
\begin{equation}
\label{eq:main_expansion}
z_\eps=q_{\pm}
+
\left(\frac{1}{\log \eps^{-1}}
+\frac{(d-1) \log\log\e^{-1}}{\log^2\e^{-1}}
+\frac{\eta}{\log^2\e^{-1}}\right)h^{\pm}_1\\
+\frac{1}{\log^2\eps^{-1}}h^{\pm}_2+o_{\Pp}\left(\frac{1}{\log^2\eps^{-1}}\right),
\end{equation}
for some deterministic vectors $h_1^{\pm},h_2^{\pm}$
and a random variable $\eta$. In other words, given the direction of the exit (``$+$'' or ``$-$''), the leading correction to $q_{\pm}$ is deterministic and equals 
\[\left(\frac{1}{\log \eps^{-1}}+\frac{(d-1) \log\log\e^{-1}}{\log^2\e^{-1}}\right)h^{\pm}_1,\]
while the remainder  
\[
\frac{1}{\log^2\eps^{-1}}\left(\eta h^{\pm}_1+h^{\pm}_2\right)+o_{\Pp}\left(\frac{1}{\log^2\eps^{-1}}\right)
\]
satisfies a scaling limit theorem. 

Moreover, we show that given the direction of the exit (``$+$'' or ``$-$''), the exit time satisfies
\begin{equation}\label{eq:tau-D-e-asymp-in-intro}
\tau_{\Dmf}^\e=\frac{1}{\lambda}\log\e^{-1}-\frac{d-1}{\lambda}\log\log\e^{-1}+\rho+C^{\pm}+o_{\Pp}(1),
\end{equation}
for a centered random variable $\rho$ that does not depend on the direction of the exit and deterministic constants $C^{\pm}$. In fact, this is also in
contrast with the case of the leading eigenvalue of algebraic  multiplicity $1$,
where the leading deterministic term is simply $\frac{1}{\lambda}\log\e^{-1}$. We note that 
 \eqref{eq:tau-D-e-asymp-in-intro} was first obtained in~\cite{Buterakos}
for the case where the drift contains no other terms except for the linear one given by the Jordan block.
The precise statements of our results are given in Section~\ref{sec:setting}.

We remark that, according to \eqref{eq:main_expansion}, the leading contributions to the deviation from $q_{\pm}$ happen along $h_1^{\pm},h_2^{\pm}$. In fact, our proof also shows how to compute smaller contributions along other directions. We also note that it is easy to obtain a generalization of our result for the case where the linearization has other eigenvalues besides the leading $\lambda$.

The paper is organized as follows. In Section~\ref{sec:setting}, we describe the setting and
the main result. The proof of the main result in  Section~\ref{sec:proof-of-main} is based on the analysis 
of the linearized system in Section~\ref{sec:proof-of-linear}.

{\bf Acknowledgment.} We would like to thank the referee for valuable constructive remarks. They  helped to improve the paper in various ways.
Yuri Bakhtin gratefully acknowledges partial support from NSF via grant
DMS-1460595.  

\section{Setting and main result}\label{sec:setting}
We will consider the family of stochastic differential equations
\begin{equation}\label{eq:SDE}
dX_{\eps}(t)=b\left(X_{\eps}(t)\right)dt+\eps\sigma\left(X_{\eps}(t)\right)dW(t),
\end{equation}
on a bounded domain $\Dmf_0\subseteq \mathbb{R}^d$, $d\in\N$. Our results are most meaningful
for $d\ge 2$, but we include $d=1$ for completeness.
The drift is given by
a  vector field~$b\in\mathcal{C}^{2}(\Dmf_0;\mathbb{R}^d)$. The random perturbation is given via
 a standard $d$-dimensional  Brownian motion $W=(W_1,\ldots,W_d)$ defined on some probability space $(\Omega,\mathcal{F},\mathrm{P})$. The noise magnitude is given by a small parameter $\eps>0$ in front of the diffusion coefficient
$\sigma$ which is assumed to be a $\mathcal{C}^2$-smooth uniformly elliptic matrix-valued function, i.e.,
$\sigma\in\mathcal{C}^2\left(\Dmf_0;M_d(\mathbb{R})\right)$,
where $M_d(\mathbb{R})$ is the space of $d$-by-$d$ matrices with real entries and there are positive constants $\sigma_{\min},\sigma_{\max}$ such that
\[
\sigma_{\min}|\xi|^2\leq\langle \sigma(x)\xi,\xi\rangle \le \sigma_{\max}|\xi|^2 \qquad\forall
\xi\in\mathbb{R}^d,\ x\in\Dmf_0.
\]
Here $\langle\cdot,\cdot\rangle$ is the standard inner product and $|\cdot|$ is the Euclidean norm in $\R^d$. We will also use $\dist(\cdot,\cdot)$ for the Euclidean point-to-point and point-to-set distances in $\R^d$. 
Standard results on stochastic differential equations (see e.g \cite{KS1991}) imply that for any starting location $X^{\eps}(0)\in\Dmf_0$, the equation~\eqref{eq:SDE} has a unique strong solution up to
\[
\tau_{\Dmf_0}^{\eps}=\inf\{t\geq 0: X_{\eps}(t)\in\partial \Dmf_0\},
\]
the exit time from $\Dmf_0$.

Let $(S^t)$ be the flow generated by the vector field $b$, i.e., $x(t)=S^tx_0$ is the solution of the autonomous ordinary differential equation
\begin{equation}\label{eq:deterministic-ODE}
\dot{x}(t)=b(x(t)),\qquad x(0)=x_0\in\Dmf_0.
\end{equation}
This flow is defined forwards and backwards in time as long as the trajectory stays within $\Dmf_0$. 

In this paper, we are interested in the asymptotic behavior, as $\eps\downarrow 0$, of the distribution of the exit location and the exit time
\[
\tau_{\Dmf}^{\eps}=\inf\{t>0: X_{\eps}(t)\notin\Dmf\}
\]
from  a subdomain $\Dmf$ compactly contained in $\Dmf_0$. We make the following assumptions on $\Dmf$ and the vector field $b$:
\begin{enumerate}[(I)]
\item The limit set of $S^{t}$ in $\Dmf$ consists of a single point assumed to be the origin~$0$ without loss of generality.

\item For every $x\in\bar{\Dmf}\setminus\{0\}$, there is a time $T(x)$ such that $S^tx\notin \Dmf$ for $t>T(x)$ while $S^tx\in\Dmf$ for all $-\infty<t<T(x)$. Here $\bar\Dmf$ denotes the closure 
of $\Dmf$. We will denote the exit point associated with  $x$ by $\pi(x)$: 
\begin{equation}
\label{eq:deterministic_exit_points}
\pi (x) = S^{T(x)}x,\quad x\in \bar\Dmf\setminus\{0\} 
\end{equation}

\item \label{cond:linear-part-Jordan} The vector field satisfies
\[
b(x)=Ax+\psi(x)|x|^2, \qquad x\in \Dmf,
\]
where 
$\psi$ is a $\Cc^2$ vector-valued function on $\Dmf_0$, and 
$A=Db(0)$ ($D$ stands for the Jacobian matrix) is a $d$-by-$d$ matrix with one real eigenvalue $\lambda>0$ of geometric multiplicity $1$ but algebraic multiplicity $d$, i.e., it is similar to a single Jordan block
\begin{equation}\label{eq:linearized-matrix}
 \begin{bmatrix}
    \lambda & 1 & 0 & 0  & \dots & 0 \\
    0 & \lambda & 1 & 0 &\dots  & 0 \\
    \vdots & \ddots & \ddots & \ddots & \ddots & \vdots\\
    0 & 0 & \dots &  \lambda & 1 & 0\\
    0 & 0 & \dots &  0& \lambda & 1\\
    0 & 0 & \dots &0 & 0  & \lambda
\end{bmatrix}.
\end{equation}
We assume without loss of generality that $A$ is already of this form, i.e., that the generalized eigenvector basis $\{e_1,\dots, e_d\}$ coincides with the canonical basis of $\mathbb{R}^d$.
\end{enumerate}

We define $q_-, q_+\in \partial\Dmf$ to be the points such that the curve
\[
\gamma=\gamma_+\cup\gamma_-\cup\{0\},\qquad \gamma_\pm=\{S^{-t}q_\pm: t\in\mathbb{R}_+\},
\]
is $\mathcal{C}^2$-smooth and tangent to the eigenvector $e_1$ at the origin. 

\begin{enumerate}
\item[(IV)] \label{cond:transversality} We require $\partial\Dmf$ to be $\Cc^2$ in
neighborhoods of $q_-$ and $q_+$ and transversal to $\gamma$ at these points.
\end{enumerate}

We assume without loss of generality that $e_1$ points in the direction of $q_+$. The importance of these boundary points comes from the fact that the distribution $X_{\eps}(\tau_{\Dmf}^{\eps})$ is asymptotically concentrated on $\{q_-,q_+\}$. Our main result describes the joint fluctuations of the random exit location around this limit and the exit time $\tau_{\Dmf}^{\e}$.

For a vector $y\in\R^d$ , we denote  by $y^{(i)}=\langle y, e_i \rangle$ its $i$-th component in the canonical basis. 

\begin{theorem}\label{thm:main-theorem} Assuming the setting described above, 
let $X_{\eps}(0)=\eps\xi_{\eps}$, where $\xi_{\eps}$ is a family of $d$-dimensional random variables independent of $W$ and converging in probability to some random variable $\xi_0$. 

Then on 
the same probability space there are events $A^{\pm}$, $d$-dimensional  random variables  $(\mu^{\pm}_\eps)_{\eps>0}$,
$1$-dimensional random variables $\rho$,
$\eta$, $(\theta_\eps^{\pm})_{\eps>0}$,
deterministic vectors $h_1^{\pm},h_2^{\pm}\in\R^d$,   and constants $C^{\pm}\in\R$ with the following properties:\\
on $A^\pm$,
\begin{equation}\label{eq:tau-D-e-asymp-in-theorem}
\tau_{\Dmf}^\e=\frac{1}{\lambda}\log\e^{-1}-\frac{d-1}{\lambda}\log\log\e^{-1}+\rho+C^{\pm}+\theta^{\pm}_\eps
\end{equation}
and
\begin{equation}
\label{eq:asymptotics-of-global-exit-location}
X_\eps(\tau_\Dmf^\e)=q_{\pm}
+
\left(\frac{1}{\log \eps^{-1}}
+\frac{(d-1) \log\log\e^{-1}}{\log^2\e^{-1}}
+\frac{\eta}{\log^2\e^{-1}}\right)h^{\pm}_1\\
+\frac{1}{\log^2\eps^{-1}}h^{\pm}_2+\frac{\mu_\eps^{\pm}}{\log^2\eps^{-1}};
\end{equation}

\[
\theta^{\pm}_\eps\stackrel{\Pp}{\to} 0,\quad \mu^{\pm}_\eps\stackrel{\Pp}{\to}0,\quad \eps\downarrow 0;
\]
if $d=1$, then $h_1^\pm=h_2^\pm=\mu_\eps^{\pm}=0$; If $d\ge 2$, the vector $h_1^\pm$ is tangent to $\partial \Dmf$ at $q_\pm$.
If $\partial\Dmf$ is flat (coincides with a hyperplane of codimension~$1$) in a small neighborhood of~$q_\pm$, then
$h_2^\pm$ is also tangent to~$\partial\Dmf$.

Moreover, the escape trajectory converges to the curve $\gamma$, i.e.,
\begin{equation}
\sup_{0\le t\le \tau_{\Dmf}^\eps} \dist(X_\eps(t),\gamma)\stackrel{\Pp}{\to}0,\quad \eps\downarrow 0.
\label{eq:exit_happens_along_gamma} 
\end{equation}

\end{theorem}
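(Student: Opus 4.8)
The plan is to decompose the escape into an \emph{inner regime}, in which $X_\eps$ stays in a small fixed ball $B_\delta=\{|x|<\delta\}$ around the origin and is governed by the linearization, and an \emph{outer regime}, in which $X_\eps$ shadows the deterministic flow $S^t$ from $\partial B_\delta$ out to $\partial\Dmf$. Since $A$ has the single eigenvalue $\lambda>0$, there are no Poincar\'e--Dulac resonances, so near $0$ one may pass to a smooth linearizing chart and treat the inner dynamics as the linear SDE with diffusion coefficient $\tilde\sigma$, $\tilde\sigma(0)=\sigma(0)$, up to controlled corrections. In the inner regime we rescale $X_\eps=\eps Y_\eps$; then $Y_\eps$ converges, uniformly on time intervals of length $\Theta(\log\eps^{-1})$, to the unstable Ornstein--Uhlenbeck process $Y(t)=e^{At}\xi_0+\int_0^t e^{A(t-s)}\sigma(0)\,dW(s)$. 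Let $\tau_\delta^\eps=\inf\{t:|X_\eps(t)|=\delta\}=\inf\{t:|Y_\eps(t)|=\delta/\eps\}$ be the inner exit time. The global exit data will be recovered as $\tau_\Dmf^\eps=\tau_\delta^\eps+T(X_\eps(\tau_\delta^\eps))+o_\Pp$ and $X_\eps(\tau_\Dmf^\eps)=\pi(X_\eps(\tau_\delta^\eps))+o_\Pp$, so that the arbitrary threshold $\delta$ drops out by flow-invariance of $\pi$.

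The core of the argument is a precise asymptotic analysis of $Y$, which I expect to occupy Section~\ref{sec:proof-of-linear}. Writing $e^{At}=e^{\lambda t}\sum_{k=0}^{d-1}\tfrac{t^k}{k!}N^k$ with $N$ the nilpotent shift $Ne_j=e_{j-1}$, the top coordinate dominates with an extra polynomial factor: $Y^{(1)}(t)\sim e^{\lambda t}\tfrac{t^{d-1}}{(d-1)!}Z$, where $Z=\xi_0^{(d)}+\int_0^\infty e^{-\lambda s}\langle\sigma(0)\,dW(s),e_d\rangle$ is Gaussian-convolved, hence a.s. nonzero by uniform ellipticity; we set $A^\pm=\{\pm Z>0\}$, so $A^+\cup A^-=\Omega$ up to a null set. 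One would then establish an expansion $Y(t)/|Y(t)|=\sgn(Z)\bigl(e_1+\tfrac{d-1}{t}e_2+\tfrac{1}{t^2}w_2+o(t^{-2})\bigr)$ (the random part of the $t^{-1}$ coefficient cancels, thanks to the Jordan structure), together with, by inverting $e^{\lambda t}\tfrac{t^{d-1}}{(d-1)!}|Z|(1+o(1))=\delta/\eps$,
\[
\lambda\tau_\delta^\eps=\log\eps^{-1}-(d-1)\log\log\eps^{-1}+b_0-\log|Z|+o_\Pp(1),
\]
with $b_0=b_0(\delta)$ explicit; the $\log\log\eps^{-1}$ term comes precisely from the prefactor $t^{d-1}$. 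Consequently $\tfrac{1}{\lambda\tau_\delta^\eps}=\tfrac{1}{\log\eps^{-1}}+\tfrac{(d-1)\log\log\eps^{-1}}{\log^2\eps^{-1}}+\tfrac{\log|Z|-b_0}{\log^2\eps^{-1}}+o_\Pp(\log^{-2}\eps^{-1})$, which already carries the structure of \eqref{eq:asymptotics-of-global-exit-location}, with $\rho$ built from $-\lambda^{-1}\log|Z|$ and $\eta$ an affine function of $\log|Z|$ (hence of $\rho$). Substituting the expansion of $\tau_\delta^\eps$ into that of $Y/|Y|$ gives $X_\eps(\tau_\delta^\eps)$ to order $o_\Pp(\log^{-2}\eps^{-1})$ with leading transverse direction $e_2$; propagating the quadratic drift $\psi(X_\eps)|X_\eps|^2$ and the variation of $\sigma$ through a Gronwall/Duhamel comparison on $[0,\tau_\delta^\eps]$ shows they perturb $X_\eps(\tau_\delta^\eps)$ and $\tau_\delta^\eps$ only beyond the relevant orders.

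For the outer regime, $X_\eps(\tau_\delta^\eps)\stackrel{\Pp}{\to}p_\pm:=\gamma_\pm\cap\partial B_\delta$ (using that $\gamma$ is, by construction, exactly the invariant curve tangent to $e_1$ at $0$), and on the bounded deterministic travel time to $\partial\Dmf$ the standard Freidlin--Wentzell estimate gives $\sup_{t\le\tau_\Dmf^\eps-\tau_\delta^\eps}|X_\eps(\tau_\delta^\eps+t)-S^tX_\eps(\tau_\delta^\eps)|\stackrel{\Pp}{\to}0$. Since $\partial\Dmf$ is $\Cc^2$ and transversal to $\gamma$ at $q_\pm$, the implicit function theorem makes $T,\pi$ of class $\Cc^2$ near $p_\pm$, and $\tau_\Dmf^\eps=\tau_\delta^\eps+T(X_\eps(\tau_\delta^\eps))+o_\Pp(1)$, $X_\eps(\tau_\Dmf^\eps)=\pi(X_\eps(\tau_\delta^\eps))+o_\Pp(\log^{-2}\eps^{-1})$. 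Taylor-expanding $T$ and $\pi$ about $p_\pm$ (with $\pi(p_\pm)=q_\pm$) and inserting the inner expansion, we collect terms by powers of $1/\log\eps^{-1}$: the $O(1)$ part of the exit time splits into the direction-independent $\rho$ and the direction-dependent $C^\pm=\lambda^{-1}b_0+T(p_\pm)+\mathrm{const}$, giving \eqref{eq:tau-D-e-asymp-in-theorem}; and $h_1^\pm$ is $D\pi(p_\pm)$ applied to the leading transverse direction, which lies in $T_{q_\pm}\partial\Dmf$ because $\operatorname{ran}D\pi(p_\pm)\subseteq T_{q_\pm}\partial\Dmf$ (the flow direction spans $\ker D\pi$). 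The deterministic coefficient $d-1$ of $\log\log\eps^{-1}/\log^2\eps^{-1}$ along $h_1^\pm$ and the random $\eta$ come from the expansion of $1/(\lambda\tau_\delta^\eps)$; $h_2^\pm$ collects the remaining deterministic $\log^{-2}\eps^{-1}$ contributions, including the Hessian term $\tfrac12D^2\pi(p_\pm)[\cdot,\cdot]$, which is tangent to $\partial\Dmf$ exactly when $\partial\Dmf$ is flat near $q_\pm$ (then $\langle\pi(\cdot)-q_\pm,n\rangle\equiv0$ forces all derivatives of $\pi$ to be tangent), while one checks that the order-$\log^{-2}\eps^{-1}$ random contributions transverse to $h_1^\pm$ cancel, leaving only $\eta h_1^\pm$. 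Finally \eqref{eq:exit_happens_along_gamma} follows by combining the inner bound ($X_\eps(t)=\eps Y_\eps(t)$ stays $o(1)$-close to the $e_1$-axis, hence to $\gamma$, once the direction has settled, and $O(\eps)$-close to $0\in\gamma$ before that) with the outer shadowing of $\gamma_\pm$.

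\textbf{Main obstacle.} The hard part is the linear analysis together with its propagation through the nonlinear and stochastic errors: three terms of the expansion of $X_\eps(\tau_\delta^\eps)$ are needed rather than just the leading order, the $\log\log\eps^{-1}$ corrections must be tracked carefully when inverting the polynomial-times-exponential growth law, and every error must be controlled at the $o_\Pp(\log^{-2}\eps^{-1})$ scale uniformly over the $\Theta(\log\eps^{-1})$-long inner interval. Once this is in place, identifying $\rho,\eta,C^\pm,h_1^\pm,h_2^\pm$ and verifying the tangency statements is essentially organizing the Taylor expansions of $\pi$ and $T$.
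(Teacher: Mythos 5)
Your overall architecture matches the paper's closely: an inner regime near the origin treated via the linearizing chart and the variation-of-constants formula, followed by an outer regime where $X_\eps$ shadows the deterministic flow and the exit data are pushed forward by a $\Cc^2$ Poincar\'e map; the tangency arguments for $h_1^\pm,h_2^\pm$ and the identification of $C^\pm$ as travel times are also consistent. (The paper works with a sup-norm box $\Vmf$ in the linearized coordinates, further subdivided into an $\eps$-dependent box $\Vmf_\eps=\{\|y\|_\infty\le\eps^\alpha\}$ and the annulus $\Vmf\setminus\Vmf_\eps$, rather than a fixed ball $B_\delta$; that is a technical difference, not a gap.)

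There is, however, a genuine gap in your identification of the scaling-limit random variable $\eta$. You write $Z=\xi_0^{(d)}+\int_0^\infty e^{-\lambda s}\langle\sigma(0)\,dW(s),e_d\rangle$, set $A^\pm=\{\pm Z>0\}$, invert $e^{\lambda t}\tfrac{t^{d-1}}{(d-1)!}|Z|(1+o(1))=\delta/\eps$ to get the $\log\log$ correction, and then assert that ``$\eta$ is an affine function of $\log|Z|$ (hence of $\rho$)''. This last claim is false and would lose a whole source of randomness. Writing $\chi=\xi_0+\int_0^\infty e^{-As}\sigma(0)\,dW(s)$ (so $\chi^{(d)}=Z$), the precise expansion of the first coordinate is
\begin{equation*}
Y^{(1)}(t)=\eps e^{\lambda t}\frac{t^{d-1}}{(d-1)!}\chi^{(d)}\left[1+\frac{d-1}{t}\frac{\chi^{(d-1)}}{\chi^{(d)}}+\Oc_\Pp(t^{-2})\right],
\end{equation*}
and similarly for $Y^{(2)}/Y^{(1)}$ the $t^{-2}$ coefficient carries $\chi^{(d-1)}/\chi^{(d)}$. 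When you invert the growth law to solve for the inner exit time and substitute back into the direction ratio, the factor $\tfrac{d-1}{t}\chi^{(d-1)}/\chi^{(d)}$ produces a $\Theta(1/\log\eps^{-1})$ random correction to the exit time (hence a $\Theta(1/\log^2\eps^{-1})$ random correction to the exit location) that is not captured by $\log|Z|$. The correct random variable is
\begin{equation*}
\eta=-\lambda\,\frac{\chi^{(d-1)}}{\chi^{(d)}}+\log\frac{|\chi^{(d)}|}{R(d-1)!\lambda^{d-1}},
\end{equation*}
which is affine in $\rho=-\lambda^{-1}\log\tfrac{|\chi^{(d)}|}{R(d-1)!\lambda^{d-1}}$ \emph{plus} an extra term in $\chi^{(d-1)}$ that is not a function of $\rho$. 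Indeed, the $(d-1)$-st and $d$-th coordinates of the Gaussian vector $N=\int_0^\infty e^{-As}\sigma(0)\,dW(s)$ are correlated but not perfectly so, and $\xi_0$ is arbitrary, so $\chi^{(d-1)}$ is genuinely new randomness. This is precisely the phenomenon the paper highlights (Remark~3 after the theorem): the scaling limit is driven by the noise picked up in both the $e_{d-1}$ and $e_d$ directions, not just $e_d$. To close this gap you must keep the $\chi^{(d-1)}$ term when inverting the exit-time equation and when expanding the ratios $Y^{(i)}/Y^{(1)}$ to second order, and then show that the resulting random $\Theta(\log^{-2}\eps^{-1})$ contribution to the exit point lies entirely along $h_1^\pm$ (so that it can be absorbed into $\eta h_1^\pm$, as the expansion~\eqref{eq:asymptotics-of-global-exit-location} requires). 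As stated, your proposal declares this contribution to be a deterministic function of $\rho$, which it is not.
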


{\noindent \bf Remarks:}
\begin{enumerate}
\item Precise expressions for the random variables involved in the statement of this theorem will
be given in the course of the proof and in the auxiliary statements that we will invoke.
The events $A^\pm$ in this theorem are defined by $A^\pm=\{\sgn\chi^{(d)}=\pm 1\}=\{\pm \chi^{(d)}>0\}$. Here~$\chi$ is a random vector responsible for the asymptotic 
direction of exit 
  introduced in the main auxiliary Theorem~\ref{thm:linear-result}, see~\eqref{eq:eta-and-chi}. It is defined in  \eqref{eq:introducing_chi} 
  in terms of the ingrediends in the variation of constants formula (the initial condition and
  the contribution from noise)
for an auxiliary equation defined in
 \eqref{eq:definitions_of_N_and_D}. The random variable $\eta$ is also defined in~\eqref{eq:eta-and-chi}, in the statement of Theorem~\ref{thm:linear-result}.
\item The random variable $\rho$ serves both directions of exit. The only difference
between the two directions in the asymptotic behavior of the exit time  is encoded in constants $C^{\pm}$. In fact,~$\rho$~is defined up to an additive shift that has to be compensated by adjusting~$C^{\pm}$. One can achieve uniqueness of $\rho$ and $C^{\pm}$ by requiring $\E\rho=0$. 
\item
As it will be clear from the proof, the direction of exit and the scaling limit of the exit distribution are asymptotically determined by the noise picked up in an infinitesimal neighborhood of the origin in the direction of $e_{d-1}$ and $e_d$.
\item
It will become clear that in some situations we can, in fact, provide more detailed information than 
Theorem~\ref{thm:main-theorem}. A nice formulation is possible, for example, in the linear case, see 
\eqref{eq:precise-formula-in-all-directions}.
\item Although it is possible to consider more general scalings 
$X_\eps(0)=\eps^\alpha \xi_\eps$ for a convergent family $(\xi_\eps)_{\e\geq 0}$ and an arbitrary scaling exponent $\alpha>0$, it will be clear from our analysis that 
the case $\alpha=1$ considered in Theorem~\ref{thm:main-theorem} is the most interesting one.
In fact, if $\alpha<1$, then the noise is asymptotically negligible and the behavior is dominated by the deterministic dynamics, and the case where $\alpha>1$ effectively reduces to $\alpha=1$ since
$\eps^\alpha \xi_\eps=\eps\cdot\eps^{\alpha-1}\xi_\eps$ and $\eps^{\alpha-1}\xi_\eps\stackrel{\mathrm{P}}{\to}0$, so the  influence of the initial condition asymptotically vanishes.
\item
In~\cite{Bak2010} and~\cite{Bak2011}, the results
had to be stated in terms of convergence in distribution since
the contributions from the stable directions were of the leading order of magnitude and converged only in distribution. In the setting of the present paper, in the absence of stable directions, we are able to state the results in terms of convergence in probability. However, our result holds
even when any nonleading, positive or negative,
eigenvalues are present, and a smooth conjugation to linear dynamics exists. In this case, the contributions from nonleading eigendirections
 are of smaller order than
the scales relevant for the asymptotics in~\eqref{eq:asymptotics-of-global-exit-location}.
\item One can restate the theorem for the situation where only convergence in distribution is required for the initial condition, and use Skorokhod's representation theorem on realization of weak convergence by almost sure convergence. 
\item
Let us emphasize the connection to the existing results. It was shown for a more general
setting in \cite{Eizenberg:MR749377} that the marginal distribution of $X_{\eps}\left(\tau_{\Dmf}^{\eps}\right)$ asymptotically concentrates on $\{q_+,q_-\}$. The precise asymptotics of the marginal limiting law of $\tau_{\Dmf}^{\eps}$ was computed for linear drift in~\cite{Buterakos}. The main novelty in our result  is the expansion \eqref{eq:asymptotics-of-global-exit-location} providing
a precise asymptotic description of fluctuations of the random exit point around $q_\pm$, along with joint asymptotics for the fluctuations of the exit time.
\end{enumerate}

\section{Proof of Theorem \ref{thm:main-theorem}}
\label{sec:proof-of-main}
Our approach is based on two steps: (i) studying the system in a small neighborhood of the origin where a change of coordinates conjugates the dynamics to a linear system; (ii) describing the behavior of $X_{\eps}$ as it follows the curve $\gamma$ between the linearizable neighborhood and the exit points~$q_\pm$.

We start with the first part. It was demonstrated in \cite{Eizenberg:MR749377}, that 
under condition \eqref{cond:linear-part-Jordan}, there is a neighborhood $\Umf$ of the origin and a smooth diffeomorphism $f:\Umf\mapsto\mathbb{R}^n$ given by
\begin{equation}
\label{eq:def-of-f}
f(x)=\lim_{t\to\infty} e^{At} S^{-t}x=x-\int_0^\infty e^{As}\psi(S^{-s}x)|S^{-s}x|^2 ds,
\end{equation}
with inverse $g$ that conjugates the linear and non-linear dynamics, i.e. 
\begin{equation}\label{eq:conjugation-relation}
f(S^tx)=e^{At}f(x)\qquad\textrm{or}\qquad Df(x)b(x)=Af(x).
\end{equation}
The integral term in~\eqref{eq:def-of-f} 
is quadratic to the leading order in small $x$, which implies
\begin{equation}
f(0)=0,\qquad Df(0)=I,
\label{eq:Df(0)}
\end{equation}
where $I$ is the identity matrix.

When $X_{\eps}(0)\in \Umf$, let $\tau_{\Umf}^{\eps}$ be the first time when $X_{\eps}(t)$ exits $\Umf$. If we set $Y_{\eps}(t)=f(X_{\eps}(t))$, then It\^o's formula and \eqref{eq:conjugation-relation} imply that this process satisfies the stochastic differential equation
\begin{equation}\label{eq:linear-SDE}
dY_{\eps}(t)=AY_{\eps}(t)dt+\eps\tilde{\sigma}\left(Y_{\eps}(t)\right)dW(t)+\frac{\eps^2}{2} L(Y_{\eps}(t))dt,
\end{equation}
for $t<\tau_{\Umf}^{\eps}$, where 
\[
\tilde{\sigma}(y)=Df\left(g(y)\right)\sigma(g(y)),\qquad L_i(y)=\sum_{j,l=1}^d\partial_j\partial_l f_i(g(y))a_{jl}\left(g(y)\right),\quad i=1,\dots,d,
\]
and $a(x)=(\sigma\sigma^T)(x)$.

  We denote $\|y\|_{\infty}=\max\{|y^{(k)}|:\ k=1,\dots,d\}$. We are going to study the precise asymptotics of the exit time and location from the box 
\[
\Vmf=\{\|y\|_{\infty}\leq R\},
\]
where $R$ is chosen small enough such that $g(\Vmf)\subset\Umf$. 
Namely, the following theorem, proved in Section \ref{sec:proof-of-linear}, characterizes the joint scaling behavior, as $\e\downarrow 0$, of the stopping time
\[
 \tau_{\mathfrak{B}}^\eps = \inf\{t>0:\ \|Y_\eps(t)\|_{\infty}=R\}
\]
and the exit point $Y_{\eps}(\tau_{\mathfrak{B}}^{\eps})$. 

\begin{theorem}\label{thm:linear-result}
Let $Y_{\eps}(0)=\e\tilde{\xi}_{\e}$, 
where $\tilde{\xi}_{\eps}$ is a family of $d$-dimensional random variables independent of $W$ and converging in probability to some $\tilde{\xi}_0$. 
Then, on 
the same probability space, there are $d$-dimensional  random variables $N$, $\chi$, $(\zeta_\eps)_{\eps>0}$,
$1$-dimensional random variables
$\rho, \eta$, $(\theta_\eps^\pm)_{\eps>0}$ with the following properties: 

\begin{equation}\label{eq:tau-B-e-asymp-in-theorem}
\tau_{\Vmf}^\e=\frac{1}{\lambda}\log\e^{-1}-\frac{d-1}{\lambda}\log\log\e^{-1}+\rho+\theta_\eps^{\pm};
\end{equation}
\[
\rho=-\frac{1}{\lambda}\log\frac{|\chi^{(d)}|}{R(d-1)!\lambda^{d-1}};
\]
on $A^\pm=\{\pm \chi^{(d)}>0\}$,
\begin{multline}
\label{eq:asymptotics-of-exit-from-small}
Y_\e(\tau_\Vmf^\e)=\pm R\Biggl[e_1+
\lambda(d-1)\left(\frac{1}{\log \eps^{-1}}
+\frac{(d-1) \log\log\e^{-1}}{\log^2\e^{-1}}
+\frac{\eta}{\log^2\e^{-1}}\right)e_2
\\+\frac{\lambda^2(d-1)(d-2)}{\log^2\eps^{-1}}e_3+\frac{\zeta_\eps}{\log^2\eps^{-1}}\Bigg];
\end{multline}
\[
\theta^\pm_\eps\stackrel{\Pp}{\to} 0,\quad \zeta_\eps\stackrel{\Pp}{\to}0,\quad \eps\downarrow 0;
\]
\[
\langle\zeta_\eps,e_1\rangle=0;
\]
\begin{equation}
\label{eq:eta-and-chi}
\chi=\tilde{\xi}_0+N,\quad\eta=-\lambda\frac{\chi^{(d-1)}}{\chi^{(d)}}+\log\frac{|\chi^{(d)}|}{R(d-1)!\lambda^{d-1}};
\end{equation}
$N$ is independent of $\tilde \xi_0$, it is centered Gaussian, with covariance matrix given by
\begin{equation}\label{eq:limit-cov-small-box}
\mathrm{E}N^{(i)}N^{(j)}=\sum_{p=0}^{d-i}\sum_{q=0}^{d-j}\binom{p+q}{q}
\frac{(-1)^{p+q}}{(2\lambda)^{p+q+1}}a_{p+i,q+j}(0);
\end{equation} 
Also,
\begin{equation}
\label{eq:motion-is-close-to-axis}
\sup_{t\leq \tau_\Vmf^\e}\mathrm{dist}\big(Y_\e(t),\mathrm{span}(e_1)\big)\stackrel{\Pp}{\to} 0,\quad \eps \downarrow 0.
\end{equation}
\end{theorem}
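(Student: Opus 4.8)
The plan is to analyze the linear SDE \eqref{eq:linear-SDE} via the variation-of-constants formula. Write $Y_\eps(t) = e^{At}Y_\eps(0) + \eps\int_0^t e^{A(t-s)}\tilde\sigma(Y_\eps(s))\,dW(s) + \frac{\eps^2}{2}\int_0^t e^{A(t-s)}L(Y_\eps(s))\,ds$. Because $A$ is a Jordan block with eigenvalue $\lambda>0$, we have $e^{At} = e^{\lambda t}P(t)$ where $P(t)$ is the polynomial (in $t$) unipotent matrix with $(i,j)$-entry $t^{j-i}/(j-i)!$ for $j\ge i$. The first step is to introduce the rescaled process $\hat Y_\eps(t) = e^{-\lambda t}Y_\eps(t)$, which should converge, as $\eps\downarrow 0$ and after replacing $Y_\eps(0)=\eps\tilde\xi_\eps$, to the random vector obtained by ``freezing'' the diffusion coefficient at the origin: on the relevant time scale $t\asymp \lambda^{-1}\log\eps^{-1}$, one expects $Y_\eps(s)$ to stay near $0$ (a fact to be bootstrapped, cf. \eqref{eq:motion-is-close-to-axis}), so $\tilde\sigma(Y_\eps(s))\approx \tilde\sigma(0) = Df(0)\sigma(0) = \sigma(0)$ by \eqref{eq:Df(0)}. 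Thus I expect $\eps^{-1}\hat Y_\eps(t) \to \chi := \tilde\xi_0 + N$ in an appropriate sense, where $N = \int_0^\infty e^{-As}\sigma(0)\,dW(s)$ (the stochastic integral against the one-sided limit), and the covariance formula \eqref{eq:limit-cov-small-box} is then a direct computation: $\mathrm{E}N N^T = \int_0^\infty e^{-As}\,a(0)\,(e^{-As})^T ds$, and expanding $e^{-As}$ in powers of $s$ and integrating $\int_0^\infty s^{p+q}e^{-2\lambda s}\,ds = (p+q)!/(2\lambda)^{p+q+1}$ yields the binomial sum after collecting the coefficient of $a_{p+i,q+j}(0)$.

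Second, I would determine the exit time and location from the asymptotics $Y_\eps(t)\approx \eps e^{\lambda t}P(t)\chi$. The last coordinate is $Y_\eps^{(d)}(t)\approx \eps e^{\lambda t}\chi^{(d)}$ (since $P(t)$ is lower-unipotent, $e_d$ is the ``fastest'' growing component), so the exit through $\|Y\|_\infty = R$ happens when $|\eps e^{\lambda t}\chi^{(d)}|\approx R$, i.e. at $\lambda\tau_\Vmf^\eps \approx \log\eps^{-1} + \log(R/|\chi^{(d)}|)$ — but this is not quite right, because the other components carry polynomial factors in $t$ that are themselves of order $\log^{d-1}\eps^{-1}$. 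The correct leading balance is governed by the $e_1$-component: $Y_\eps^{(1)}(t)\approx \eps e^{\lambda t}\bigl(\chi^{(1)} + t\chi^{(2)} + \cdots + \tfrac{t^{d-1}}{(d-1)!}\chi^{(d)}\bigr) \approx \eps e^{\lambda t}\tfrac{t^{d-1}}{(d-1)!}\chi^{(d)}$, and since $e_1$ is where the sup-norm is attained (the trajectory converges to $\mathrm{span}(e_1)$), the exit condition is $\eps e^{\lambda t}\tfrac{t^{d-1}}{(d-1)!}|\chi^{(d)}| \approx R$. Taking logarithms: $\lambda t + (d-1)\log t = \log\eps^{-1} + \log\tfrac{R(d-1)!}{|\chi^{(d)}|}$, and solving iteratively with the ansatz $t = \tfrac{1}{\lambda}\log\eps^{-1} - \tfrac{d-1}{\lambda}\log\log\eps^{-1} + O(1)$ (substitute and check the $\log\log$ terms cancel to the stated order) produces \eqref{eq:tau-B-e-asymp-in-theorem} with $\rho = -\tfrac{1}{\lambda}\log\tfrac{|\chi^{(d)}|}{R(d-1)!\lambda^{d-1}}$; the extra $\lambda^{d-1}$ inside the log comes from the next-order correction $\log t \approx \log(\tfrac{1}{\lambda}\log\eps^{-1}) = \log\log\eps^{-1} - \log\lambda$.

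Third, for the exit location, I would plug the now-known $\tau_\Vmf^\eps$ back into $Y_\eps(\tau)\approx \eps e^{\lambda\tau}P(\tau)\chi$, normalize by the $e_1$-component (which equals $\pm R$ up to lower order, with sign $\sgn\chi^{(d)}$, defining $A^\pm$), and read off each coordinate. The ratio of the $e_2$-component to the $e_1$-component is $\approx \bigl(\tfrac{t^{d-2}}{(d-2)!}\chi^{(d)} + \tfrac{t^{d-3}}{(d-3)!}\chi^{(d-1)} + \cdots\bigr)\big/\bigl(\tfrac{t^{d-1}}{(d-1)!}\chi^{(d)}\bigr) = \tfrac{d-1}{t}\bigl(1 + \tfrac{(d-2)}{t}\tfrac{\chi^{(d-1)}}{\chi^{(d)}} + \cdots\bigr)$; substituting $t = \tau_\Vmf^\eps$ and expanding $1/t$ and $1/t^2$ using \eqref{eq:tau-B-e-asymp-in-theorem} generates exactly the $\tfrac{1}{\log\eps^{-1}}$, $\tfrac{(d-1)\log\log\eps^{-1}}{\log^2\eps^{-1}}$, and $\tfrac{\eta}{\log^2\eps^{-1}}$ terms, where $\eta$ collects the $O(1/t^2)$ pieces, namely $-\lambda\tfrac{\chi^{(d-1)}}{\chi^{(d)}}$ from the subleading $\chi$-component and a $\log$-contribution $\log\tfrac{|\chi^{(d)}|}{R(d-1)!\lambda^{d-1}}$ from expanding $1/\tau_\Vmf^\eps$ around $\tfrac{1}{\lambda}\log\eps^{-1}$ — matching \eqref{eq:eta-and-chi}. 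The $e_3$-component ratio is $\approx \tfrac{\binom{d-1}{2}}{t^2}\cdot 2 = \tfrac{\lambda^2(d-1)(d-2)}{\log^2\eps^{-1}}$ to leading order, and all components $e_k$ with $k\ge 2$ beyond what is displayed, together with error terms from the $\chi$-expansion, the frozen-coefficient approximation, and the $\eps^2 L$ drift, are collected into $\zeta_\eps/\log^2\eps^{-1}$ with $\langle\zeta_\eps,e_1\rangle=0$ by construction.

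\textbf{Main obstacle.} The delicate point is making the ``frozen coefficient'' approximation $\tilde\sigma(Y_\eps(s))\rightsquigarrow\sigma(0)$ rigorous with errors that are genuinely $o_\Pp(1/\log^2\eps^{-1})$ \emph{relative to the dominant} $\eps e^{\lambda\tau}$ scale — i.e. $o_\Pp(\eps e^{\lambda\tau}/\log^2\eps^{-1}) = o_\Pp(R/\log^2\eps^{-1})$ in absolute terms — uniformly over the random time $\tau_\Vmf^\eps$. One has to run a bootstrap: first use crude bounds (e.g. the Freidlin–Wentzell-type estimate, or a direct Gronwall/maximal-inequality argument) to show $\sup_{s\le\tau_\Vmf^\eps}|Y_\eps(s)| \to 0$ in probability and, more quantitatively, that for $s$ up to roughly $\tau_\Vmf^\eps - K$ the process is at most of order $\eps\,\mathrm{poly}(\log\eps^{-1})$; then control the stochastic convolution difference $\eps\int_0^{\tau} e^{A(\tau-s)}(\tilde\sigma(Y_\eps(s))-\sigma(0))\,dW(s)$ using the Lipschitz property of $\tilde\sigma$ together with the exponential decay of $e^{-As}$ away from the terminal time, and separately handle the terminal window $s\in[\tau-K,\tau]$ where $Y_\eps$ is of order $R$ but the time interval is $O(1)$. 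Controlling the stochastic integral at the \emph{random} time $\tau_\Vmf^\eps$ (rather than a deterministic time) requires either a stopping-time argument with Doob's inequality on $e^{-\lambda t}\cdot(\text{martingale part})$ or working on the high-probability event where $\tau_\Vmf^\eps$ lies in a shrinking-relative-width deterministic window, which is itself supplied by the exit-time asymptotics derived above — so the time and location analyses are somewhat entangled and must be set up to avoid circularity, presumably by first pinning $\tau_\Vmf^\eps$ to within $O(1)$ and only then extracting the $o_\Pp$ refinements.
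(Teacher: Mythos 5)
Your plan is computationally the same as the paper's: Duhamel's formula $Y_\eps(t)=\eps e^{At}[\tilde\xi_\eps+N_\eps(t)+\eps D_\eps(t)]$, the frozen-coefficient Gaussian limit $N=\int_0^\infty e^{-As}\sigma(0)\,dW(s)$ with the covariance computed exactly as you describe, the iterative solution of $\eps e^{\lambda t}\frac{t^{d-1}}{(d-1)!}|\chi^{(d)}|=R$ for the exit time, and back-substitution into the first three coordinates for the location; your identifications of $\rho$, $\eta$, and the $e_2$, $e_3$ coefficients all match. Where you genuinely diverge is in the organization of the hard step you correctly flag as the main obstacle. The paper does not split time at $\tau^\eps_\Vmf-K$; it splits space, introducing an intermediate box $\Vmf_\eps=\{\|y\|_\infty\le\eps^\alpha\}$, $\alpha\in(0,1)$, and running the argument in two stages. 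Inside $\Vmf_\eps$ one has $|Y_\eps(s)|\le\eps^\alpha$ deterministically, so the Lipschitz bound gives $\sup_{t\le\tilde\tau_\eps}\|N_\eps(t)-M(t)\|_\infty\stackrel{\Pp}{\to}0$ with no bootstrapping; after exiting $\Vmf_\eps$ the entire noise contribution is shown to be $\Oc_\Pp(\eps)$ in absolute terms, hence negligible against the deterministically amplified initial condition of size $\eps^\alpha$, so no frozen-coefficient approximation is needed there at all. The random-time issue that worries you is also dispatched more cheaply than you anticipate: since $\tilde\sigma$ is bounded on all of $\Vmf$, the quadratic variation of each $N_\eps^{(i)}$ is bounded by $C\int_0^\infty e^{-2\lambda s}s^{2(d-1)}ds<\infty$ uniformly in $\eps$ and in the stopping time, so Doob/BDG gives tightness of $\sup_{t\le\tau_\Vmf^\eps}\|N_\eps(t)\|_\infty$ outright, and no deterministic pinning window for $\tau_\Vmf^\eps$ is required. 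Your route could likely be made to work, but the terminal-window analysis it requires is heavier than the paper's spatial cutoff. Two small slips, both self-corrected and harmless: $\sup_{s\le\tau^\eps_\Vmf}|Y_\eps(s)|$ does not tend to $0$ (it reaches $R$ by definition); and with the paper's upper-triangular Jordan block it is $e_1$, not $e_d$, that carries the top polynomial factor $t^{d-1}/(d-1)!$ — as you then use correctly.
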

\begin{remark}\rm The term containing $e_3$  in~\eqref{eq:asymptotics-of-exit-from-small} is not present for $d=1,2$. The term containing~$e_2$ in~\eqref{eq:asymptotics-of-exit-from-small} does not appear for $d=1$. This may be formally
achieved by setting $e_i=0$ for $i>d$ and also can be formally seen from the presence of factors $(d-1)$
and $(d-2)$ in front of these terms. In fact, in the case $d=1$, 
the identity~\eqref{eq:asymptotics-of-exit-from-small} is trivial, and the identity~\eqref{eq:tau-B-e-asymp-in-theorem} is contained in~\cite{Day95}.

\end{remark}
\begin{remark}\rm Only components $N^{(d-1)}$, $N^{(d)}$ are effectively used in the theorem, but 
it is convenient to introduce all $d$ coordinates to be used in the proof.
\end{remark}
\begin{remark}\rm The theorem implies that the asymptotic choice of the outgoing direction is described by
\[
\mathrm{P}\left\{Y_{\eps}^{(1)}(\tau_{\Vmf}^{\e})=\pm R\right\}\to \mathrm{P}\left\{\pm \chi^{(d)}>0\right\},\quad \eps \downarrow 0.
\] 
\end{remark}

\begin{corollary} \label{cor:linear-result-to-X} There are deterministic vectors $u^{\pm}_1,u^{\pm}_2\in\R^d$,
and a family of random vectors $(\beta^{\pm}_\eps)_{\eps>0}$ such that
$\beta_\eps^{\pm}\stackrel{\Pp}{\to}0$ and,
on events $A^{\pm}$ introduced in Theorem~\ref{thm:linear-result},
\begin{multline}
\label{eq:asymptotics-of-exit-from-small-transformed}
X_\eps(\tau_\Vmf^\e)=g(Y_\e(\tau_\Vmf^\e))=g(\pm Re_1)
+
\left(\frac{1}{\log \eps^{-1}}
+\frac{(d-1) \log\log\e^{-1}}{\log^2\e^{-1}}
+\frac{\eta}{\log^2\e^{-1}}\right)u^{\pm}_1\\
+\frac{1}{\log^2\eps^{-1}}u^{\pm}_2+\frac{\beta^{\pm}_\eps}{\log^2\eps^{-1}}.
\end{multline}
If $d=1$, then $u^{\pm}_1=u^{\pm}_2=\beta_\eps^\pm=0$. If $d=2$, then $u^{\pm}_1$ and $u^{\pm}_2$ are tangent to $\partial \Vmf$ at $g(\pm Re_1)$ and collinear to each other.

Also,
\begin{equation}
\label{eq:tracking_gamma_in_small_neighb}
\sup_{0\le t\le \tau_{\Vmf}^\eps} \dist(X_\eps(t),\gamma)\stackrel{\Pp}{\to}0,\quad \eps\downarrow 0.
\end{equation}
\end{corollary}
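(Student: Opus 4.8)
The plan is to derive Corollary~\ref{cor:linear-result-to-X} from Theorem~\ref{thm:linear-result} by a careful Taylor expansion of the diffeomorphism $g$ around the points $\pm Re_1$, keeping track of the relevant scales $1/\log\e^{-1}$ and $1/\log^2\e^{-1}$. First I would rewrite the conclusion \eqref{eq:asymptotics-of-exit-from-small} of Theorem~\ref{thm:linear-result} as
\[
Y_\e(\tau_\Vmf^\e)=\pm Re_1 + a_\e v_1^{\pm} + \frac{1}{\log^2\e^{-1}}v_2^{\pm} + \frac{\zeta_\e}{\log^2\e^{-1}},
\]
where $a_\e=\frac{1}{\log\e^{-1}}+\frac{(d-1)\log\log\e^{-1}+\eta}{\log^2\e^{-1}}$, and $v_1^{\pm}=\pm R\lambda(d-1)e_2$, $v_2^{\pm}=\pm R\lambda^2(d-1)(d-2)e_3$ are deterministic. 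Note that $a_\e\to 0$ and $a_\e^2/\,(1/\log^2\e^{-1})\to 1$, so the quadratic term in the expansion of $g$ along the $v_1^{\pm}$-direction contributes at exactly the order $1/\log^2\e^{-1}$ and must be retained; all higher-order terms (cubic in $a_\e$, or the product of $a_\e$ with $\zeta_\e/\log^2\e^{-1}$, or quadratic in $1/\log^2\e^{-1}$) are $o_{\Pp}(1/\log^2\e^{-1})$ and get absorbed into $\beta_\e^{\pm}$.

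The second step is the Taylor expansion itself. Writing $g(\pm Re_1 + w) = g(\pm Re_1) + Dg(\pm Re_1)w + \tfrac12 D^2g(\pm Re_1)(w,w) + O(|w|^3)$ and substituting $w = a_\e v_1^{\pm} + \tfrac{1}{\log^2\e^{-1}}v_2^{\pm} + \tfrac{\zeta_\e}{\log^2\e^{-1}}$, I would collect terms by order. The coefficient of $a_\e$ is $u_1^{\pm}:=Dg(\pm Re_1)v_1^{\pm}$. The coefficient of $1/\log^2\e^{-1}$ is $u_2^{\pm}:=Dg(\pm Re_1)v_2^{\pm} + \tfrac12 D^2g(\pm Re_1)(v_1^{\pm},v_1^{\pm})$, since $a_\e^2 = 1/\log^2\e^{-1} + o(1/\log^2\e^{-1})$. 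The remaining contributions, together with $Dg(\pm Re_1)\zeta_\e$ (which is $o_{\Pp}(1)$ by continuity of $Dg$ and $\zeta_\e\stackrel{\Pp}{\to}0$), define $\beta_\e^{\pm}/\log^2\e^{-1}$; one checks $\beta_\e^{\pm}\stackrel{\Pp}{\to}0$ because each piece is either $o_{\Pp}(1)$ after multiplication by $\log^2\e^{-1}$ or bounded times a factor going to $0$ in probability. Here I would also use that $\eta$ and $\zeta_\e$ are tight (indeed $\eta$ is a fixed random variable and $\zeta_\e\stackrel{\Pp}{\to}0$) to control the error terms uniformly.

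For the geometric claims when $d=1,2$: if $d=1$ then $v_1^{\pm}=v_2^{\pm}=0$ and $\zeta_\e=0$ (the latter because $\langle\zeta_\e,e_1\rangle=0$ and $\zeta_\e$ lives in $\R^1$), so $u_1^{\pm}=u_2^{\pm}=\beta_\e^{\pm}=0$. If $d=2$ then $v_1^{\pm}=\pm R\lambda e_2$ and $v_2^{\pm}=0$, so $u_1^{\pm}=\pm R\lambda\, Dg(\pm Re_1)e_2$ and $u_2^{\pm}=\tfrac12 R^2\lambda^2 D^2g(\pm Re_1)(e_2,e_2)$. Both are nonzero multiples of derivatives of $g$ evaluated in the $e_2$-direction; since $e_2$ is tangent to $\partial\Vmf=\{\|y\|_\infty=R\}$ at $\pm Re_1$ (the face $y^{(1)}=\pm R$ is flat), and $g$ maps $\Vmf$ diffeomorphically, $Dg(\pm Re_1)e_2$ is tangent to $g(\partial\Vmf)$ at $g(\pm Re_1)$; similarly $D^2g(\pm Re_1)(e_2,e_2)$ is tangent, because the restriction of $g$ to the flat face $\{y^{(1)}=\pm R\}$ is a surface whose first and second derivatives in a tangent direction both lie in the tangent space of the image surface. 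Hence $u_1^{\pm}$ and $u_2^{\pm}$ are both tangent to $\partial\Vmf$ at $g(\pm Re_1)$, and they are collinear since in $d=2$ the tangent space is one-dimensional. Finally, \eqref{eq:tracking_gamma_in_small_neighb} follows from \eqref{eq:motion-is-close-to-axis}: the set $g(\mathrm{span}(e_1)\cap\Vmf)$ is precisely the portion of $\gamma$ inside $g(\Vmf)$ by the conjugation relation \eqref{eq:conjugation-relation} and the definition of $\gamma$, and $g$ is Lipschitz on the compact $\bar\Vmf$, so $\dist(X_\e(t),\gamma)\le \mathrm{Lip}(g)\cdot\dist(Y_\e(t),\mathrm{span}(e_1))$ uniformly in $t\le\tau_\Vmf^\e$, which tends to $0$ in probability.

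The main obstacle is bookkeeping: making sure that the only second-order contribution that survives is $\tfrac12 D^2g(\pm Re_1)(v_1^{\pm},v_1^{\pm})$, that the cross term between $a_\e v_1^{\pm}$ and $\tfrac{1}{\log^2\e^{-1}}v_2^{\pm}$ (of order $a_\e/\log^2\e^{-1}$) and between $a_\e v_1^{\pm}$ and $\tfrac{\zeta_\e}{\log^2\e^{-1}}$ are genuinely $o_{\Pp}(1/\log^2\e^{-1})$, and that $\eta$ inside $a_\e$ — which is a nondegenerate random variable — does not spoil the identification of $u_2^{\pm}$ as deterministic. The point is that $a_\e = \tfrac{1}{\log\e^{-1}} + \tfrac{(d-1)\log\log\e^{-1}}{\log^2\e^{-1}} + \tfrac{\eta}{\log^2\e^{-1}}$, so the $\eta$-part of $a_\e$ enters the linear term $u_1^{\pm}a_\e$ exactly (that is the intended placement, matching the statement), while $a_\e^2 - \tfrac{1}{\log^2\e^{-1}}$ is $O((\log\log\e^{-1})/\log^3\e^{-1}) + O_{\Pp}(1/\log^3\e^{-1}) = o_{\Pp}(1/\log^2\e^{-1})$, so $\eta$ contributes nothing extra at the $1/\log^2\e^{-1}$ level — it is quietly absorbed into $\beta_\e^{\pm}$. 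Once this accounting is set up correctly, everything else is a routine application of the smoothness of $g$ and the convergence statements in Theorem~\ref{thm:linear-result}.
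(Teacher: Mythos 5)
Your treatment of the main expansion \eqref{eq:asymptotics-of-exit-from-small-transformed} is correct and is precisely the paper's one-sentence argument carried out in full: Taylor-expand $g$ to second order at $\pm Re_1$, note that $a_\eps^2=1/\log^2\eps^{-1}+o_{\Pp}(1/\log^2\eps^{-1})$ so the only surviving nonlinear contribution is $\tfrac12 D^2g(\pm Re_1)(v_1^\pm,v_1^\pm)$ at order $1/\log^2\eps^{-1}$, and absorb all cross, cubic, and $\zeta_\eps$ terms into $\beta_\eps^\pm$. Your explicit check that the random variable $\eta$ inside $a_\eps$ does not contaminate the deterministic coefficient $u_2^\pm$ is exactly the right bookkeeping. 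The derivation of \eqref{eq:tracking_gamma_in_small_neighb} from \eqref{eq:motion-is-close-to-axis} via the Lipschitz constant of $g$ on $\bar\Vmf$ and the inclusion $g(\mathrm{span}(e_1)\cap\Vmf)\subset\gamma$ is also fine (and more explicit than the paper, which leaves this point to the reader), as is the $d=1$ degeneracy.

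The one genuine flaw is in your justification of the $d=2$ tangency of $u_2^\pm$. You assert that the second derivative $D^2g(\pm Re_1)(e_2,e_2)$ of the parametrization $t\mapsto g(\pm Re_1+te_2)$ of the boundary curve lies in the tangent space of the image. That is false in general: the second derivatives of a parametrized curve or hypersurface split into a tangential part and a normal part, and the normal part is exactly the second fundamental form, which vanishes only if the image has zero normal curvature in that direction. Since in $d=2$ you have $v_2^\pm=0$ and hence $u_2^\pm=\tfrac12 R^2\lambda^2\,D^2g(\pm Re_1)(e_2,e_2)$, your argument yields tangency (equivalently, collinearity with $u_1^\pm=\pm R\lambda\, Dg(\pm Re_1)e_2$) only when the curve $g(\{y^{(1)}=\pm R\})$ has vanishing curvature at $g(\pm Re_1)$; for a generic $C^2$ diffeomorphism this fails, e.g.\ $g(y_1,y_2)=(y_1+y_2^2,\,y_2)$ gives $Dg\,e_2=(0,1)$ but $D^2g(e_2,e_2)=(2,0)$. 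So this step needs either an argument exploiting special structure of $g=f^{-1}$ or a reinterpretation of ``tangent to $\partial\Vmf$''; note that the analogous claim for $h_2^\pm$ in Theorem~\ref{thm:main-theorem} is asserted only under a flatness hypothesis on $\partial\Dmf$, which is consistent with the curvature obstruction just described. The remainder of your proof is unaffected by this issue.
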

\begin{proof} The expansion \eqref{eq:asymptotics-of-exit-from-small-transformed} follows directly from~\eqref{eq:asymptotics-of-exit-from-small}
and the Taylor expansion for $g$ near~$\pm Re_1$ if we take into account that the only
nonnegligible nonlinear contributions  come from quadratic terms and appear 
in the form of $1/\log^2\eps^{-1}$ with coefficients given by second partial derivatives of $g$
at $\pm Re_1$. 
\end{proof}

\bigskip

\noindent\textit{Proof of Theorem \ref{thm:main-theorem}.~}
This proof is based on~Theorem~\ref{thm:linear-result}, its Corollary~\eqref{cor:linear-result-to-X}, the strong Markov property, and a simple analysis of  
the evolution of  $X_{\e}$ along $\gamma$ between leaving $g(\Vmf)$ and exiting~$\Dmf$, which is dominated by the deterministic dynamics applied to the initial condition $X_\eps(\tau_{\Vmf}^{\e})$
since the noise contributions are much smaller. 

The restriction of the map $\pi$ defined in~\eqref{eq:deterministic_exit_points}
to a relative neighborhood $U\subset g(\partial \Vmf)$ of $g(\pm Re_1)$ is the Poincar\'e map for the flow $(S^t)$ 
between the surfaces $U$ and $\partial\Dmf$.  Due to our smoothness assumptions and the transversality condition~(IV), 
$\pi$ is $\Cc^2$ in~$U$. So if $X_\eps(\tau_\Vmf^\e)\in U$, then applying the second order Taylor expansion of $\pi$
near $g(\pm Re_1)$ to $X_\eps(\tau_\Vmf^\e)$ \eqref{eq:asymptotics-of-exit-from-small-transformed} gives
\begin{multline}
\label{eq:asymptotics-under-Poincare-map}
\pi\left(X_\eps(\tau_\Vmf^\e)\right)=q_{\pm}
+
\left(\frac{1}{\log \eps^{-1}}
+\frac{(d-1) \log\log\e^{-1}}{\log^2\e^{-1}}
+\frac{\eta}{\log^2\e^{-1}}\right)h^{\pm}_1
\\
+\frac{1}{\log^2\eps^{-1}}h^{\pm}_2+\frac{\tilde \beta_\eps^{\pm}}{\log^2\eps^{-1}}
\end{multline}
for some $\tilde \beta_\eps^{\pm}\stackrel{\Pp}{\to}0$. Namely, $h_1^\pm$ is linear in $u^\pm_1$:
\[h_1^\pm = D \pi (g(\pm Re_1))u^\pm_1,\] whereas
$h_2^\pm$ is composed of the linear part $D \pi (g(\pm Re_1))u^\pm_2$ and a quadratic form in $u^\pm_1$:
\[
 {h_2^\pm}^{(k)}=\sum_{i} \frac{\partial}{\partial x^{(i)}} \pi^{(k)} (g(\pm Re_1)){u^\pm_2}^{(i)}
+\frac{1}{2}\sum_{i,j} \frac{\partial^2}{\partial x^{(i)}\partial x^{(j)}} \pi^{(k)} (g(\pm Re_1)){u^\pm_1}^{(i)}{u^\pm_1}^{(j)}. 
 \]
The classical expansion of solutions in powers of small $\eps$ on finite time intervals,  see~\cite[Chapter 2]{FW2012}, 
implies that for any $T$,
\begin{equation*}
 \Pp\left\{\sup_{t\in [0,T]} \left|X_\eps(\tau^\eps_\Vmf+t) - S^t g(X_\eps(\tau^\eps_\Vmf))\right|>\eps^{1/2}\right\}\to 0,\quad \eps\to 0.
\end{equation*}
Since $S^t g(X_\eps(\tau^\eps_\Vmf))$ is itself close to $\gamma$, we 
can use the transversality condition 
to  obtain~\eqref{eq:exit_happens_along_gamma} from~\eqref{eq:tracking_gamma_in_small_neighb} and to derive~\eqref{eq:tau-D-e-asymp-in-theorem} (with $C^\pm=T(g(\pm Re_1))$) from \eqref{eq:tau-B-e-asymp-in-theorem}. We also get
\[
\Pp\{|X_\eps(\tau_{\Dmf}^{\e})-\pi(X_\eps(\tau_\Vmf^\e))|>\eps^{1/2}\}\to 0, \quad \eps\to 0.
\]
Combining this with~\eqref{eq:asymptotics-under-Poincare-map}, we obtain~\eqref{eq:asymptotics-of-global-exit-location}
and complete the proof of Theorem~\ref{thm:main-theorem}.
\qed

\section{The linear system}\label{sec:proof-of-linear}
In this section, we prove Theorem \ref{thm:linear-result}. We will repeatedly make use of the elementary formulas
\begin{equation}\label{eq:elementary-formula}
(1+x)^p=1+px+\mathcal{O}(x^2),\qquad \log(1+x)=x+\mathcal{O}(x^2),\quad x\to 0.
\end{equation}

Duhamel's formula for the SDE \eqref{eq:linear-SDE} implies
\begin{equation}\label{eq:Duhamel}
Y_{\eps}(t)=\eps e^{At}\left[\tilde\xi_{\eps}+ N_{\eps}(t)+\eps D_{\eps}(t)\right],\quad t\le \tau_{\Umf}^\eps,
\end{equation}
where
\begin{equation}
\label{eq:definitions_of_N_and_D}
N_{\eps}(t)=\int_0^t e^{-As}\tilde{\sigma}(Y_{\eps}(s))dW(s),\qquad D_{\eps}(t)=\frac{1}{2}\int_0^t e^{-As} L(Y_{\eps}(s))ds.
\end{equation}
Recall that
\[
e^{At}=e^{\lambda t}\begin{bmatrix}
    1 & t & \frac{ t^2}{2!} & \frac{ t^3}{3!}  & \dots & \frac{ t^{d-1}}{(d-1)!} \\
    0 & 1 &  t & \frac{t^2}{2!} &\dots  &  \frac{ t^{d-2}}{(d-2)!}  \\
    \vdots & \ddots & \ddots & \ddots & \ddots & \vdots\\
    0 & 0 & \dots &  1 &  t & \frac{t^2}{2!}\\
    0 & 0 & \dots &  0& 1 &  t\\
    0 & 0 & \dots &0 & 0  & 1 
\end{bmatrix},\qquad 
e^{-As}=e^{-\lambda s}\begin{bmatrix}
    1 & - s & \frac{(-s)^2}{2!} & \dots & \frac{(- s)^{d-1}}{(d-1)!} \\
    0 & 1 &  - s &\dots  &  \frac{(- s)^{d-2}}{(d-2)!}  \\
    \vdots & \ddots  & \ddots & \ddots & \vdots\\
    0 & 0 & \dots & 1 & - s\\
    0 & 0 & \dots  & 0  & 1 
\end{bmatrix},
\]
so for any vector $\xi\in\mathbb{R}^d$, we have
\begin{equation}\label{eq:multipl-with-exp-coordinatewise}
(e^{At}\xi)^{(i)}=e^{\lambda t}\sum_{j=0}^{d-i}\frac{ t^j}{j!}\xi^{(i+j)},\qquad (e^{-As}\xi)^{(i)}=e^{-\lambda s}\sum_{j=0}^{d-i}\frac{(- s)^j}{j!}\xi^{(i+j)}.
\end{equation}
In particular,
\begin{equation}\label{eq:sum-form-N}
N_{\eps}^{(i)}(t)=\int_0^te^{-\lambda s}\sum_{k=1}^d\sum_{j=0}^{d-i}\frac{(-s)^j}{j!}\tilde{\sigma}_{i+j,k}(Y_{\eps}(s))\,dW_k(s),
\end{equation}
and 
\begin{equation}\label{eq:sum-form-D}
D_{\eps}^{(i)}(t)=\frac{1}{2}\int_0^te^{-\lambda s}\sum_{j=0}^{d-i}\frac{(- s)^j}{j!}L_{i+j}(Y_{\eps}(s))\,ds.
\end{equation}
The following lemma implies that $N_{\e}(t)$ is of the order of one for all times while $D_{\e}(t)$ is bounded.

\begin{lemma}\label{lem:a-priori-N-D}
There are constants $c,D_0>0$ such that
\begin{equation}\label{eq:bounded-D}
\sup_{\e\geq 0}\mathrm{P}\left\{\sup_{t\leq\tau_{\Vmf}^{\e}}\|N_{\e}(t)\|_{\infty}>z\right\}\leq\frac{c}{z^2}
,\qquad\sup_{\eps>0,\ t\leq\tau_{\Vmf}^{\e}}\|D_{\eps}(t)\|_{\infty}\leq D_0.
\end{equation}
\end{lemma}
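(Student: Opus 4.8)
The two estimates are of different nature — the bound on $D_\eps$ is a pathwise, deterministic computation, while the one on $N_\eps$ rests on the $L^2$-theory of continuous martingales — but both exploit the same two structural features visible in~\eqref{eq:sum-form-N}--\eqref{eq:sum-form-D}: first, the exponential weight $e^{-\lambda s}$ dominates every polynomial factor $s^j$, so that the relevant integrals over $[0,\infty)$ converge; second, for $s\le\tau_\Vmf^\eps$ the process $Y_\eps(s)$ stays in the compact set $\Vmf$ (note $g(\Vmf)\subset\Umf$ forces $\tau_\Vmf^\eps\le\tau_\Umf^\eps$, so the representations \eqref{eq:Duhamel}--\eqref{eq:definitions_of_N_and_D} and the coordinate formulas \eqref{eq:sum-form-N}--\eqref{eq:sum-form-D} are valid on all of $[0,\tau_\Vmf^\eps]$), on which the continuous functions $\tilde\sigma$ and $L$ are bounded by constants independent of $\eps$. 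So I would begin by recording $L_0:=\sup_{y\in\Vmf}\max_i|L_i(y)|<\infty$ and $M:=\sup_{y\in\Vmf}\max_{k,l}|\tilde\sigma_{kl}(y)|<\infty$.

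For $D_\eps$, insert $|L_{i+j}(Y_\eps(s))|\le L_0$ into~\eqref{eq:sum-form-D} and use $\int_0^\infty e^{-\lambda s}\frac{s^j}{j!}\,ds=\lambda^{-(j+1)}$ to get, for every $t\le\tau_\Vmf^\eps$ and every $i$, the bound $|D_\eps^{(i)}(t)|\le\frac{L_0}{2}\sum_{j=0}^{d-i}\lambda^{-(j+1)}$; taking the maximum over $i$ yields the second estimate in~\eqref{eq:bounded-D} with, say, $D_0=\frac{L_0}{2}\sum_{j=1}^{d}\lambda^{-j}$.

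For $N_\eps$, fix a coordinate $i$ and consider the stopped integral $M^{(i)}(t):=N_\eps^{(i)}(t\wedge\tau_\Vmf^\eps)$. By~\eqref{eq:sum-form-N} its $dW_k$-coefficients are bounded in modulus by $M\,e^{-\lambda s}\sum_{j=0}^{d-1}\frac{s^j}{j!}$, hence $M^{(i)}$ has bounded quadratic variation,
\[
\langle M^{(i)}\rangle_\infty\le d\,M^2\int_0^\infty e^{-2\lambda s}\Big(\sum_{j=0}^{d-1}\tfrac{s^j}{j!}\Big)^2 ds=:V_0<\infty ,
\]
a bound that is pathwise and uniform in $\eps\ge0$. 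In particular $M^{(i)}$ is a genuine $L^2$-bounded martingale with $\mathrm{E}[(M^{(i)}(t))^2]=\mathrm{E}\langle M^{(i)}\rangle_t\le V_0$, and Doob's $L^2$ maximal inequality gives $\Pp\{\sup_{t\le\tau_\Vmf^\eps}|N_\eps^{(i)}(t)|>z\}\le V_0/z^2$. Summing over $i=1,\dots,d$ produces the first estimate in~\eqref{eq:bounded-D} with $c=d\,V_0$.

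I do not expect a real obstacle here; the points deserving a line of care are the inclusion $\tau_\Vmf^\eps\le\tau_\Umf^\eps$ (which legitimizes the use of \eqref{eq:Duhamel}--\eqref{eq:sum-form-D} on the full interval), the passage from the a priori local martingale $N_\eps^{(i)}(\cdot\wedge\tau_\Vmf^\eps)$ to a true martingale (immediate from the pathwise bound on its quadratic variation just displayed), and the verification that $L_0$, $M$, and $V_0$ are manifestly $\eps$-independent — which is precisely where the compactness of $\Vmf$ and the continuity of $L$ and $\tilde\sigma$ on $g(\Vmf)$ are used.
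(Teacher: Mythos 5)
Your proof is correct and follows essentially the same route as the paper: a pathwise bound on $D_\eps$ from the boundedness of $L$ on $\Vmf$ and the convergence of $\int_0^\infty e^{-\lambda s}s^j\,ds$, and an $\eps$-uniform bound on the quadratic variation of each component $N_\eps^{(i)}$ followed by a martingale maximal estimate (the paper uses Chebyshev plus BDG where you use Doob's $L^2$ maximal inequality, which is an interchangeable step here). No gaps.
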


\begin{proof}The second claim follows from $\|e^{-As}\|_{\infty}\leq Cs^{d-1}e^{-\lambda s}$ and the boundedness of $L$ (which is due to the boundedness of $\sigma$, $g$, and $f$ together with its derivatives):
\[
\sup_{t\leq\tau_{\Vmf}^{\e}}\|D_{\eps}(t)\|_{\infty}\leq C\sup_{t\geq 0}\int_0^te^{-\lambda s}s^{d-1}ds<\infty.
\]
To prove the first claim, observe (from, e.g., \eqref{eq:sum-form-N}) that each component $N_{\eps}^{(i)}(t)$ is a martingale. Thus we write using Chebyshev's inequality and the BDG inequality
\begin{multline*}
\mathrm{P}\left\{\sup_{t\leq\tau_{\Vmf}^{\e}}\|N_{\eps}(t)\|_{\infty}>z\right\}\leq 
d\max_{i=1,\dots, d}\mathrm{P}\left\{\sup_{t\leq\tau_{\Vmf}^{\e}}|N_{\eps}^{(i)}(t)|>z\right\}\\
\leq
\frac{d\cdot\max_{i=1,\dots,d}\mathrm{E}\sup_{t\leq \tau_{\Vmf}^{\e}}|N_{\eps}^{(i)}(t)|^2}{z^2}\leq C\frac{\max_{i=1,\dots,d}\sup_{t\geq 0}\mathrm{E}\langle N_{\eps}^{(i)}\rangle_{t\wedge\tau_{\Vmf}^{\e}}}{z^2},
\end{multline*}
where $\langle\cdot\rangle$ is the quadratic variation process. The right-hand side is uniformly bounded 
due to the argument that we have used above for $D_{\e}$.
\end{proof}

We are going to study the precise asymptotics of the exit time and location from the box $\Vmf$. We do this in two steps: (1) in a small $\eps$-dependent neighborhood
\[
\Vmf_{\e}=\{\|y\|_{\infty}\leq \e^{\alpha}\}
\]
 of the origin, where $\alpha\in(0,1)$ so  $\Vmf_{\e}$ is  still larger than the noise magnitude; (2) between 
 exiting~$\Vmf_{\eps}$ and the final exit from $\Vmf$. 

In part (1),  $Y_{\e}$ is close to the origin, which allows us to control the error of the linear approximation and to approximate $N_{\e}(t)$ determining the exit direction by a Gaussian random vector. In part (2), the deterministic process dominates, and and we can control the deviations of $Y_{\e}$ from the corresponding solution of \eqref{eq:deterministic-ODE}.

\subsection{Exit from a small neighborhood of the origin}
Let $\tBe$ be the exit time of the process $Y_\eps$ from $\Vmf_{\e}$. 
Our assumption on $X_{\e}(0)$ implies
\[
\lim_{\e\downarrow 0}\mathrm{P}\{Y_{\e}(0)\in\Vmf_{\e}\}=1.
\]

\begin{lemma}\label{lem:small-box-exit-large}
The exit time $\tBe$ converges to infinity in probability, i.e., for all $T\ge 0$,
\[
\lim_{\e\downarrow 0}\mathrm{P}\{\tBe\leq T\}=0.
\]
\end{lemma}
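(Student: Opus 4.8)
The plan is to show that for small $\eps$, the process $Y_\eps$ started from $Y_\eps(0)=\eps\tilde\xi_\eps$ cannot reach the boundary $\|y\|_\infty = \eps^\alpha$ of $\Vmf_\eps$ by any fixed time $T$, because on $[0,T]$ the linear drift $AY_\eps$ can at most amplify a quantity of order $\eps$ to a quantity of order $\eps$ (times a constant depending on $T$), while the noise and the $\eps^2$-correction contribute even less; since $\alpha<1$, this stays well below $\eps^\alpha$. First I would work directly from the Duhamel representation~\eqref{eq:Duhamel}, valid for $t\le\tau_\Umf^\eps$, noting that on the event $\{\tBe\le T\}$ (and for $\eps$ small enough that $\Vmf_\eps\subset g^{-1}(\Umf)$) we have $\tBe\le\tau_\Umf^\eps$, so the representation applies up to $\tBe$.

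The key steps, in order: (i) Fix $T>0$. On $\{\tBe \le T\}$, evaluate~\eqref{eq:Duhamel} at $t=\tBe$ to get $\eps^\alpha = \|Y_\eps(\tBe)\|_\infty \le \eps \,\|e^{A\tBe}\|_\infty\, \big(\|\tilde\xi_\eps\|_\infty + \sup_{t\le\tBe}\|N_\eps(t)\|_\infty + \eps\sup_{t\le\tBe}\|D_\eps(t)\|_\infty\big)$. (ii) Bound $\|e^{A\tBe}\|_\infty \le \|e^{AT}\|_\infty =: M_T$ using $\tBe\le T$ and the explicit form of $e^{At}$ (all entries nonnegative and increasing in $t$ for $t\ge 0$, up to the $e^{\lambda t}$ factor). (iii) Invoke Lemma~\ref{lem:a-priori-N-D}: $\sup_{t\le\tau_\Vmf^\eps}\|D_\eps(t)\|_\infty \le D_0$ deterministically, and $\sup_{t\le\tau_\Vmf^\eps}\|N_\eps(t)\|_\infty$ is stochastically bounded uniformly in $\eps$ (the tail bound $c/z^2$); since $\tBe\le\tau_\Vmf^\eps$, these bounds hold with $\tBe$ in place of $\tau_\Vmf^\eps$. (iv) Since $\tilde\xi_\eps$ converges in probability, $\|\tilde\xi_\eps\|_\infty$ is also stochastically bounded. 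Combining, on $\{\tBe\le T\}$ we have $\eps^\alpha \le \eps M_T (\Xi_\eps + \eps D_0)$ where $\Xi_\eps := \|\tilde\xi_\eps\|_\infty + \sup_{t\le\tau_\Vmf^\eps}\|N_\eps(t)\|_\infty$ is a stochastically bounded family. Hence
\[
\mathrm{P}\{\tBe\le T\} \le \mathrm{P}\left\{\Xi_\eps + \eps D_0 \ge \frac{\eps^{\alpha-1}}{M_T}\right\}.
\]
Since $\alpha<1$, we have $\eps^{\alpha-1}\to\infty$ as $\eps\downarrow 0$, so the right-hand side tends to $0$ by stochastic boundedness of $\Xi_\eps$ (for any $K$, $\mathrm{P}\{\Xi_\eps > K\}$ can be made small uniformly in $\eps$, and $\eps^{\alpha-1}/M_T$ eventually exceeds $K+1$ while $\eps D_0 \to 0$).

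The only mild subtlety — and the one point requiring care rather than being purely routine — is the interplay of stopping times: one must confirm that the Duhamel formula~\eqref{eq:Duhamel}, stated for $t\le\tau_\Umf^\eps$, is legitimately applied at the random time $\tBe$, which requires $\tBe\le\tau_\Umf^\eps$ on the relevant event, i.e. that $\Vmf_\eps$ is contained in the linearizing neighborhood for all small $\eps$ (true since $\eps^\alpha\to 0$ and $g$ is continuous with $g(0)=0$, so $g(\Vmf_\eps)\subset\Umf$ eventually). Everything else is a direct consequence of Lemma~\ref{lem:a-priori-N-D} and the convergence-in-probability hypothesis on the initial condition; no new estimates are needed.
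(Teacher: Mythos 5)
Your proof is correct and takes essentially the same approach as the paper: apply Duhamel up to $\tBe$ on the event $\{\tBe\le T\}$, bound $\|e^{A\tBe}\|_\infty$ by a constant depending only on $T$, and conclude from the tightness of $\tilde\xi_\eps$ and Lemma~\ref{lem:a-priori-N-D} that the required inequality $\eps^\alpha\le C\eps(\ldots)$ becomes impossible with high probability as $\eps\downarrow 0$ since $\alpha<1$. The paper organizes the final step as a three-term union bound with thresholds $\eps^\alpha/4$ rather than collecting everything into a single stochastically bounded quantity $\Xi_\eps$, but this is a purely cosmetic difference.
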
 

\begin{proof}
Using \eqref{eq:Duhamel}, we can write
\begin{multline*}
\mathrm{P}\{\tBe\leq T\}\leq
 \mathrm{P}\left\{C \e e^{\lambda T} T^{d-1}\|\tilde\xi_{\e}\|_{\infty}\geq \frac{\e^\alpha}{4}\right\}
 + 
\mathrm{P}\left\{C \e e^{\lambda T} T^{d-1}\sup_{t\leq T\wedge\tau_{\Vmf}^{\eps}}\|N_{\eps}(t)\|_{\infty}\geq\frac{\e^{\alpha}}{4}\right\}\\ +\mathrm{P}\left\{C \e^2e^{\lambda T} T^{d-1}\sup_{t\leq T\wedge\tau_{\Vmf}^{\eps}}\|D_{\eps}(t)\|_{\infty}\geq\frac{\e^{\alpha}}{4}\right\},
\end{multline*}
where we used $\sup_{t\in[0,T]}\|e^{At}\|_{\infty}\leq C e^{\lambda T} T^{d-1}$. The first term converges to zero by the tightness of $\tilde\xi_{\e}$, while the second and third terms do so by Lemma \ref{lem:a-priori-N-D}.
\end{proof}

\begin{lemma}\label{lem:N-limit-small-box} As $\epsilon\downarrow 0$,
$N_{\e}(\tBe)$ converges in probability to
a centered Gaussian vector N, independent of $\tilde\xi_0$, with covariance matrix described in \eqref{eq:limit-cov-small-box}.
\end{lemma}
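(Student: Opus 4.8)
The plan is to show that $N_\eps(\tBe)$ is well approximated by a frozen-coefficient version in which the diffusion coefficient $\tilde\sigma(Y_\eps(s))$ is replaced by its value $\tilde\sigma(0)$ at the origin, and that this frozen version converges to a Gaussian limit. Concretely, write $N_\eps(t) = M_\eps(t) + E_\eps(t)$, where
\[
M_\eps(t) = \int_0^t e^{-As}\tilde\sigma(0)\,dW(s), \qquad E_\eps(t) = \int_0^t e^{-As}\bigl(\tilde\sigma(Y_\eps(s)) - \tilde\sigma(0)\bigr)\,dW(s).
\]
First I would deal with the error term $E_\eps(\tBe)$. On the event $\{\tBe \le T\}$ we have $\|Y_\eps(s)\|_\infty \le \eps^\alpha$ for $s \le \tBe$, so by Lipschitz continuity of $\tilde\sigma$ (which follows from the $\Cc^2$-smoothness of $f,g,\sigma$ and boundedness of the relevant derivatives on the compact closure of $\Vmf$) the integrand is bounded in norm by $C\eps^\alpha e^{-\lambda s} s^{d-1}$ on that event. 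A BDG/Chebyshev estimate exactly as in Lemma~3.2, applied to $E_\eps(t\wedge\tBe\wedge T)$, then gives $\Pp\{\|E_\eps(\tBe\wedge T)\|_\infty > z, \tBe \le T\} \le C\eps^{2\alpha}/z^2$, so $E_\eps(\tBe\wedge T) \to 0$ in probability; combined with Lemma~3.3 ($\tBe \to \infty$ in probability) and the fact that $e^{-As}$ decays, one gets $E_\eps(\tBe) \xrightarrow{\Pp} 0$. Here one should be slightly careful to phrase everything with the extra truncation at a large but fixed $T$ and then let $T \to \infty$ at the end, using that $M_\eps(\infty)$ is a.s.\ finite and that the tails $\int_T^\infty e^{-As}\tilde\sigma(0)\,dW(s)$ are small in probability uniformly in $\eps$.

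Next, for the main term: $M_\eps(t) = M(t)$ does not actually depend on $\eps$ (the integrand is deterministic times $dW$), and $M(t)$ is a continuous Gaussian martingale. Its coordinatewise quadratic (co)variation, using \eqref{eq:multipl-with-exp-coordinatewise} and $a(0) = (\tilde\sigma\tilde\sigma^T)(0) = (\sigma\sigma^T)(0)$ (since $Df(0)=I$ by \eqref{eq:Df(0)}, hence $\tilde\sigma(0) = Df(g(0))\sigma(g(0)) = \sigma(0)$), is
\[
\langle M^{(i)}, M^{(j)}\rangle_t = \int_0^t e^{-2\lambda s}\Bigl(\sum_{p=0}^{d-i}\frac{(-s)^p}{p!}\Bigr)\Bigl(\sum_{q=0}^{d-j}\frac{(-s)^q}{q!}\Bigr)_{\!\!}a_{p+i,q+j}(0)\,ds
\]
once the products are expanded; letting $t\to\infty$ and using $\int_0^\infty e^{-2\lambda s} s^{p+q}\,ds = (p+q)!/(2\lambda)^{p+q+1}$ together with $(p+q)!/(p!\,q!) = \binom{p+q}{q}$ yields exactly the covariance matrix \eqref{eq:limit-cov-small-box}. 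Thus $M(\tBe) \to M(\infty) =: N$ in probability by Lemma~3.3 and a.s.\ convergence of the martingale $M(t)$ as $t\to\infty$ (its quadratic variation converges), and $N$ is the stated centered Gaussian vector.

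Finally, independence of $N$ from $\tilde\xi_0$: the vector $N = M(\infty)$ is a measurable functional of the Brownian motion $W$ alone (the integrand $e^{-As}\tilde\sigma(0)$ being deterministic), while $\tilde\xi_0$ is the limit in probability of $\tilde\xi_\eps$, which is independent of $W$ by hypothesis; hence $\tilde\xi_0$ is independent of $\sigma(W)$ and in particular of $N$. Assembling the three pieces, $N_\eps(\tBe) = M(\tBe) + E_\eps(\tBe) \xrightarrow{\Pp} N$. The main obstacle I anticipate is the bookkeeping around the infinite time horizon — making the replacement of $\tBe$ by $\infty$ rigorous uniformly in $\eps$ requires combining Lemma~3.3 with uniform-in-$\eps$ tail bounds on both $M$ and $E_\eps$ beyond a large fixed time $T$, rather than any single hard estimate; the frozen-coefficient error bound itself is routine given Lemma~3.2.
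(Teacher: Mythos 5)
Your proposal is correct and follows essentially the same route as the paper: the paper also freezes the coefficient, introduces the Gaussian martingale $M(t)=\int_0^te^{-As}\sigma(0)\,dW(s)$ (using $\tilde\sigma(0)=\sigma(0)$ via $Df(0)=I$), computes the limiting covariance from $\int_0^\infty s^{p+q}e^{-2\lambda s}\,ds=(p+q)!/(2\lambda)^{p+q+1}$, and controls $\sup_{t\le\tBe}\|N_\eps(t)-M(t)\|_\infty\stackrel{\Pp}{\to}0$ by BDG together with the Lipschitzness of $\tilde\sigma$ and the bound $\|Y_\eps(s)\|_\infty\le\eps^\alpha$ up to $\tBe$, finishing with Lemma~\ref{lem:small-box-exit-large}. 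Your worry about the $T$-truncation is unnecessary for the error term, since the stopped martingale's quadratic variation is bounded by $C\eps^{2\alpha}\int_0^\infty e^{-2\lambda s}s^{2(d-1)}\,ds$ uniformly over the whole half-line; it is only needed (in the standard, easy form) to pass from $M(\tBe)$ to $M(\infty)$.
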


\begin{proof}
Let us consider the Gaussian martingale
\[
M(t)=\int_0^te^{-As}\sigma(0)dW(s),
\]
with quadratic variation matrix
\[
\langle M\rangle_t=\int_0^te^{-As}a(0)e^{-A^Ts}ds.
\]
This matrix is uniformly bounded in $t$ and therefore the martingale convergence theorem implies the existence of the almost sure, componentwise limit
\[
N=\int_0^\infty e^{-As}\sigma(0)dW(s)=\lim_{t\to\infty}M(t),
\]
a centered Gaussian vector with covariance matrix that can be computed using \eqref{eq:multipl-with-exp-coordinatewise}:
\[
\mathrm{E}N^{(i)}N^{(j)}=\sum_{p=0}^{d-i}\sum_{q=0}^{d-j}\frac{(-1)^{p+q}}{p!q!}a_{p+i,q+j}(0)\int_0^\infty e^{-2\lambda s}s^{p+q}ds.
\]
 Using this and $\int_0^{\infty}x^n e^{-ax}dx=n!/a^{n+1}$, we derive~\eqref{eq:limit-cov-small-box}. 

A straightforward calculation based on the BDG inequality, the Lipschitzness of $\tilde{\sigma}(\cdot)$, the identity $Df(0)=I$, and the definition of $\tBe$ shows that
\[
\sup_{t\leq \tBe}\|N_{\e}(t)-M(t)\|_{\infty}\stackrel{\Pp}{\to} 0.
\]
This, together with Lemma \ref{lem:small-box-exit-large}, finishes the proof.
\end{proof}

Let us now introduce the exit time in the individual directions
\[
\tau_{i}^{\eps}=\inf\{t>0: |Y_{\eps}^{(i)}(t)|=\e^{\alpha}\},
\]
where we recall that $Y_{\eps}^{(i)}$ is the $i$th component of $Y_{\eps}$. Clearly, $\tBe=\min_i \tau_{i}^{\eps}$.

\begin{lemma}\label{lem:typical-exit-side}
The exit happens in the direction of $e_1$ with overwhelming probability, i.e.,
\[
\lim_{\e\downarrow 0}\mathrm{P}\left\{\tBe=\tau_{1}^{\e}\right\}=1.
\]
\end{lemma}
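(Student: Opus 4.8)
The plan is to use the Duhamel representation \eqref{eq:Duhamel} at time $\tBe$ together with the explicit form of $e^{At}$ and the facts, established in Lemmas~\ref{lem:a-priori-N-D}--\ref{lem:N-limit-small-box}, that $\tBe\to\infty$ in probability, that $N_\e(\tBe)$ converges to the nondegenerate Gaussian vector $N$ (with $N^{(d)}$ genuinely random), and that $\eps D_\e$ is a negligible perturbation. Writing $\chi_\e=\tilde\xi_\e+N_\e(\tBe)+\eps D_\e(\tBe)$, we have $\|Y_\e(\tBe)\|_\infty=\eps^\alpha$ and $Y_\e(\tBe)=\eps e^{A\tBe}\chi_\e$, so from \eqref{eq:multipl-with-exp-coordinatewise},
\[
Y_\e^{(i)}(\tBe)=\eps e^{\lambda\tBe}\sum_{j=0}^{d-i}\frac{\tBe^{\,j}}{j!}\chi_\e^{(i+j)},\qquad i=1,\dots,d.
\]
The key structural observation is that among all components, the first one, $Y_\e^{(1)}(\tBe)=\eps e^{\lambda\tBe}\bigl(\chi_\e^{(1)}+\tBe\chi_\e^{(2)}+\cdots+\tfrac{\tBe^{d-1}}{(d-1)!}\chi_\e^{(d)}\bigr)$, carries the top power $\tBe^{d-1}$ of the (large) exit time, whereas $Y_\e^{(i)}(\tBe)$ only involves powers up to $\tBe^{d-i}$. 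Since $\chi_\e$ converges in probability to $\chi=\tilde\xi_0+N$ with $\chi^{(d)}\ne 0$ a.s., the first component dominates all the others by a factor that blows up like a power of $\tBe$.

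The steps I would carry out are: (i) Record the common prefactor $m_\e:=\eps e^{\lambda\tBe}$ and note $|Y_\e^{(1)}(\tBe)|\le\eps^\alpha$, so $m_\e\,\bigl|\sum_{j=0}^{d-1}\tfrac{\tBe^{\,j}}{j!}\chi_\e^{(1+j)}\bigr|\le\eps^\alpha$; since the bracket is, with probability tending to $1$, comparable to $\tfrac{\tBe^{d-1}}{(d-1)!}|\chi^{(d)}|$ (because $\chi_\e^{(d)}\to\chi^{(d)}\ne 0$ and $\tBe\to\infty$, so lower-order terms in $\tBe$ are negligible relative to the $\tBe^{d-1}$ term), we get an upper bound $m_\e\tBe^{d-1}\le C\eps^\alpha$ on the event $E_\e$ of probability $\to1$. (ii) For $i\ge 2$, bound $|Y_\e^{(i)}(\tBe)|=m_\e\bigl|\sum_{j=0}^{d-i}\tfrac{\tBe^{\,j}}{j!}\chi_\e^{(i+j)}\bigr|\le m_\e\cdot C\tBe^{d-i}(1+\max_k|\chi_\e^{(k)}|)$; combining with (i), on $E_\e\cap\{\max_k|\chi_\e^{(k)}|\le M\}$ this is at most $C'M\,\eps^\alpha\tBe^{d-i}/\tBe^{d-1}=C'M\eps^\alpha\tBe^{-(i-1)}\le C'M\eps^\alpha\tBe^{-1}$, which is strictly less than $\eps^\alpha$ once $\tBe$ is large. (iii) Since $\tBe\to\infty$ in probability and, by tightness of $\tilde\xi_\e$ and Lemma~\ref{lem:a-priori-N-D}, $\max_k|\chi_\e^{(k)}|$ is tight, the event on which $|Y_\e^{(i)}(\tBe)|<\eps^\alpha$ simultaneously for all $i\ge 2$ has probability $\to1$; on that event the maximum defining $\|Y_\e(\tBe)\|_\infty=\eps^\alpha$ must be attained at $i=1$, i.e.\ $\tBe=\tau_1^\e$.

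The one point requiring a little care — and the main (mild) obstacle — is step~(i): one must rule out an accidental near-cancellation in the bracket $\sum_{j=0}^{d-1}\tfrac{\tBe^{\,j}}{j!}\chi_\e^{(1+j)}$ that would make $m_\e\tBe^{d-1}$ much larger than $\eps^\alpha$. This is handled by observing that the bracket equals $\tfrac{\tBe^{d-1}}{(d-1)!}\bigl(\chi_\e^{(d)}+O(\tBe^{-1})\max_k|\chi_\e^{(k)}|\bigr)$, and on the event $\{|\chi_\e^{(d)}|\ge\delta,\ \max_k|\chi_\e^{(k)}|\le M,\ \tBe\ge T_0(\delta,M)\}$ — whose probability tends to $1$ as first $\e\downarrow0$ and then $\delta\downarrow0$, $M\uparrow\infty$, $T_0\uparrow\infty$ — the bracket is bounded below in absolute value by $\tfrac{\delta}{2(d-1)!}\tBe^{d-1}$, giving the desired bound $m_\e\tBe^{d-1}\le C_\delta\eps^\alpha$. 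A standard $\liminf$-over-a-sequence-of-good-events argument then yields $\lim_{\e\downarrow0}\mathrm P\{\tBe=\tau_1^\e\}=1$.
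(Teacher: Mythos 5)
Your proposal is correct and follows essentially the same route as the paper: both use the Duhamel formula \eqref{eq:Duhamel} at $t=\tBe$, the fact that $\tBe\to\infty$ in probability, and the convergence $\chi_\e^{(d)}\to\chi^{(d)}\ne 0$ a.s.\ to show that $Y_\e^{(1)}(\tBe)$ dominates the other components by a power of $\tBe$ (the paper phrases this as $Y_\e^{(i)}(\tBe)/Y_\e^{(1)}(\tBe)\stackrel{\Pp}{\to}0$ via the first-order approximation \eqref{eq:Y-i-formulas-small-box}, while you spell out the same no-cancellation point through explicit good events). No gaps; the extra care you take in step (i) is exactly what justifies the paper's $(1+o_{\Pp}(1))$ factor.
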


\begin{proof}
Observe that \eqref{eq:Duhamel}, \eqref{eq:multipl-with-exp-coordinatewise}, and \eqref{eq:sum-form-N} combined with Lemmas \ref{lem:small-box-exit-large}--\ref{lem:N-limit-small-box} imply the first order approximation
\begin{equation}\label{eq:Y-i-formulas-small-box}
Y_{\e}^{(i)}(\tBe)=\e e^{\lambda \tBe}\frac{\tBe^{d-i}}{(d-i)!}\left[\tilde\xi_{\e}^{(d)}+N_{\e}^{(d)}(\tBe)\right]\left(1+o_{\Pp}(1)\right),\quad i=1,\dots,d,
\end{equation}
where we write $A_\eps=o_{\Pp}(B_\eps)$ if $A_\eps/B_\eps \stackrel{\Pp}{\to} 0$ as $\eps\downarrow 0$.
 This means, by Lemma \ref{lem:small-box-exit-large}, that 
\[
\frac{Y_{\eps}^{(i)}(\tBe)}{Y_{\eps}^{(1)}(\tBe)}\stackrel{\Pp}{\to} 0,
\quad \eps\downarrow 0,
\qquad i=2,\dots,d,
\]
which proves the claim since
\[
\mathrm{P}\left\{\tBe\neq\tau_1^{\e}\right\} \leq\sum_{i=1}^d\mathrm{P}\left\{\left|\frac{Y_{\eps}^{(i)}(\tBe)}{Y_{\eps}^{(1)}(\tBe)}\right|\geq 1\right\}\to 0.
\]
\end{proof}

Let us introduce the abbreviations
\begin{equation}
\label{eq:introducing_chi}
\chi_{\e}(t)=\tilde\xi_{\e}+N_{\e}(t)+\eps D_{\e}(t),\qquad \chi_{\e}=\chi_{\e}(\tBe),\qquad \chi=\tilde\xi_0+N,
\end{equation}
and notice that
\begin{equation}
\label{eq:chi-converges}
 \chi_{\e}\stackrel{\mathrm{\Pp}}{\to}\chi
\end{equation}
due to Lemma \ref{lem:N-limit-small-box} and \eqref{eq:bounded-D}. We can now formulate the main result of the subsection providing several leading terms in an expansion for $\tBe$.
To avoid lengthy formulas, we introduce the random variables 
\[
G_{\e}(\alpha)=\log\left(|\chi_{\e}^{(d)}|\frac{(1-\alpha)^{d-1}}{(d-1)!\lambda^{d-1}}\right),\qquad \eta_\e(\alpha)=-\lambda\frac{\chi_\e^{(d-1)}}{\chi_\e^{(d)}}+G_\eps(\alpha),
\]
where $\chi_\e^{(d-1)}=0$ for $d=1$.

\begin{proposition}\label{prop:tau-B-eps-asymp} The following representation holds:
\begin{equation}\label{eq:asymp-exit-time}
\tBe=\frac{1-\alpha}{\lambda}\log\e^{-1}-\frac{d-1}{\lambda}\log\log\e^{-1}-\frac{1}{\lambda}G_\eps(\alpha)+\frac{ \tilde{K}(\e)}{\lambda},
\end{equation}
where
\begin{equation}\label{eq:e-K-eps}
\tilde{K}(\e)=\frac{(d-1)^2}{1-\alpha}\frac{\log\log\e^{-1}}{\log\e^{-1}}+
\frac{d-1}{(1-\alpha)\log\e^{-1}}\eta_\e(\alpha)+o_{\Pp}\left(\frac{1}{\log\e^{-1}}\right),\quad \e\downarrow 0.
\end{equation}
In particular, 
\begin{equation}
\label{eq:K-eps-o1}
\tilde{K}(\e)=o_{\Pp}(1).
\end{equation}
\end{proposition}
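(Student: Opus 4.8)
The plan is to extract the asymptotics of $\tBe$ directly from the exit-in-direction-$e_1$ characterization. By Lemma~\ref{lem:typical-exit-side}, with probability tending to $1$ we have $\tBe=\tau_1^\e$, so $|Y_\e^{(1)}(\tBe)|=\e^\alpha$. First I would write out $Y_\e^{(1)}(\tBe)$ exactly using Duhamel's formula~\eqref{eq:Duhamel} together with~\eqref{eq:multipl-with-exp-coordinatewise}:
\[
Y_\e^{(1)}(\tBe)=\e e^{\lambda\tBe}\sum_{j=0}^{d-1}\frac{\tBe^{\,j}}{j!}\chi_\e^{(1+j)},
\]
where $\chi_\e$ is as in~\eqref{eq:introducing_chi}. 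Since $\tBe\to\infty$ in probability (Lemma~\ref{lem:small-box-exit-large}) and $\chi_\e\to\chi$ with $\chi^{(d)}\neq 0$ a.s., the dominant term in the sum is the $j=d-1$ term, and I would factor it out:
\[
\e^\alpha=\e e^{\lambda\tBe}\frac{\tBe^{\,d-1}}{(d-1)!}\Bigl|\chi_\e^{(d)}\Bigr|\Bigl(1+\tfrac{(d-1)!}{\tBe^{\,d-1}}\textstyle\sum_{j=0}^{d-2}\tfrac{\tBe^{\,j}}{j!}\tfrac{\chi_\e^{(1+j)}}{\chi_\e^{(d)}}\Bigr).
\]
The bracketed correction factor is $1+\tfrac{d-1}{\tBe}\tfrac{\chi_\e^{(d-1)}}{\chi_\e^{(d)}}+O_\Pp(\tBe^{-2})$.

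Next I would take logarithms to turn this into an equation for $\tBe$. Writing $L=\log\e^{-1}$, taking $-\log$ of both sides and dividing by $\lambda$ gives
\[
\tBe=\frac{(1-\alpha)L}{\lambda}-\frac{d-1}{\lambda}\log\tBe-\frac{1}{\lambda}G_\e(\alpha)-\frac{1}{\lambda}\log\Bigl(1+\tfrac{d-1}{\tBe}\tfrac{\chi_\e^{(d-1)}}{\chi_\e^{(d)}}+O_\Pp(\tBe^{-2})\Bigr),
\]
after absorbing the constant $(d-1)!\lambda^{d-1}/(1-\alpha)^{d-1}$ appropriately so that it matches the definition of $G_\e(\alpha)$; here I am using that $\e^\alpha = \e^{\alpha}$ so $\log\e^{\alpha}=-\alpha L$ and $\log\e = -L$, hence $\log(\e^{\alpha}/\e)=(1-\alpha)L$. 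This is an implicit equation for $\tBe$. I would solve it by bootstrapping: a first pass using $\tBe=\tfrac{(1-\alpha)L}{\lambda}(1+o_\Pp(1))$ gives $\log\tBe=\log L+\log\tfrac{1-\alpha}{\lambda}+o_\Pp(1)$, yielding the leading terms $\tBe=\tfrac{(1-\alpha)L}{\lambda}-\tfrac{d-1}{\lambda}\log L+O_\Pp(1)$, matching the claimed form up to the remainder $\tilde K(\e)/\lambda$.

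The delicate part is pinning down $\tilde K(\e)$ to the stated precision $O_\Pp(1/L)$, including the explicit $\tfrac{(d-1)^2}{1-\alpha}\tfrac{\log L}{L}$ and $\tfrac{d-1}{(1-\alpha)L}\eta_\e(\alpha)$ terms. For this I would substitute the second-pass estimate $\tBe=\tfrac{(1-\alpha)L}{\lambda}-\tfrac{d-1}{\lambda}\log L-\tfrac{1}{\lambda}G_\e(\alpha)+\tfrac{\tilde K}{\lambda}$ back into $\log\tBe$ and expand using~\eqref{eq:elementary-formula}:
\[
\log\tBe=\log L+\log\tfrac{1-\alpha}{\lambda}+\log\Bigl(1-\tfrac{(d-1)\log L}{(1-\alpha)L}-\tfrac{G_\e(\alpha)}{(1-\alpha)L}+o_\Pp(\tfrac{1}{L})\Bigr),
\]
then expand the outer log to first order, collect all terms of order $1/L$ and $\log L/L$, and similarly expand $\log(1+\tfrac{d-1}{\tBe}\tfrac{\chi_\e^{(d-1)}}{\chi_\e^{(d)}})\sim\tfrac{(d-1)}{(1-\alpha)L/\lambda}\cdot\tfrac{\chi_\e^{(d-1)}}{\chi_\e^{(d)}}\cdot\tfrac{1}{\lambda}$ at the relevant order. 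Grouping the $-\tfrac{d-1}{\lambda}\cdot(\text{stuff})$ contribution from the $\log\tBe$ term with the contribution from the correction factor produces exactly $\tfrac{(d-1)^2}{1-\alpha}\tfrac{\log L}{L}$ plus $\tfrac{d-1}{(1-\alpha)L}$ times $\bigl(-\lambda\tfrac{\chi_\e^{(d-1)}}{\chi_\e^{(d)}}+G_\e(\alpha)\bigr)=\eta_\e(\alpha)$, which is~\eqref{eq:e-K-eps}; the $o_\Pp(1)$ claim~\eqref{eq:K-eps-o1} is then immediate since $\log L/L\to 0$, $\eta_\e(\alpha)/L\to 0$ (as $\eta_\e(\alpha)$ converges in probability by~\eqref{eq:chi-converges}). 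The main obstacle is purely bookkeeping: ensuring all cross-terms at scale $\log L/L$ and $1/L$ are captured and that the lower-order components $\chi_\e^{(1)},\dots,\chi_\e^{(d-2)}$ genuinely contribute only at $o_\Pp(1/L)$, which follows because they enter $\log\tBe$'s correction only through $\tBe^{-2}$ and smaller powers. I would also need to note that the event $\{\tBe=\tau_1^\e\}$ has probability $\to 1$, so all the estimates, being ``in probability,'' extend to the full space.
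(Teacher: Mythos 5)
Your proposal follows essentially the same route as the paper: reduce to the event $\{\tBe=\tau_1^\e\}$ via Lemma~\ref{lem:typical-exit-side}, write the exit equation $\e^\alpha=\e e^{\lambda\tBe}\frac{\tBe^{d-1}}{(d-1)!}|\chi_\e^{(d)}|\bigl(1+\frac{d-1}{\tBe}\frac{\chi_\e^{(d-1)}}{\chi_\e^{(d)}}+\mathcal{O}_{\Pp}(\tBe^{-2})\bigr)$, define $\tilde K(\e)$ implicitly by~\eqref{eq:asymp-exit-time}, and bootstrap; the only difference is that you work additively with logarithms where the paper manipulates the multiplicative identity~\eqref{eq:solving-for-K-0} with $(1+x)^{d-1}$ factors, which is a cosmetic distinction. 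The bookkeeping you outline (including the self-referential $\tilde K/L$ term being absorbed into $o_{\Pp}(1/L)$ once $\tilde K=o_{\Pp}(1)$ is known from the first pass) matches the paper's computation and yields~\eqref{eq:e-K-eps} correctly.
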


\begin{proof} Using \eqref{eq:Duhamel}, \eqref{eq:multipl-with-exp-coordinatewise}, \eqref{eq:sum-form-N}, Lemmas \ref{lem:small-box-exit-large}--\ref{lem:N-limit-small-box}, and keeping 
one more term compared to~\eqref{eq:Y-i-formulas-small-box}, we obtain
\[
Y_{\e}^{(1)}(\tBe)=\e e^{\lambda\tBe}\frac{\tBe^{d-1}}{(d-1)!}\chi_{\e}^{(d)}\left(1+\frac{(d-1)}{\tBe}\frac{\chi_{\e}^{(d-1)}}{\chi_{\e}^{(d)}}+\mathcal{O}_{\Pp}\left(\tBe^{-2}\right)\right),
\]
where $A_\eps=\mathcal{O}_{\Pp}(B_\eps)$ means that the distributions of $A_\eps/B_\eps$ form a tight family for small $\eps$.
On $\{\tBe^{\e}=\tau^{\eps}_1\}$, which has probability converging to one by Lemma \ref{lem:typical-exit-side}, we have $|Y_{\eps}^{(1)}(\tBe)|=\e^{\alpha}$ and consequently $\tBe$ is a solution of the equation
\begin{equation}\label{eq:exit-time-eq}
 \e^{\alpha}=\e e^{\lambda\tBe}\frac{\tBe^{d-1}}{(d-1)!}|\chi_{\e}^{(d)}|\left(1+\frac{(d-1)}{\tBe}\frac{\chi_{\e}^{(d-1)}}{\chi_{\e}^{(d)}}+\mathcal{O}_{\Pp}\left(\tBe^{-2}\right)\right).
\end{equation}
We now define $\tilde K(\eps)$ by \eqref{eq:asymp-exit-time}, or, equivalently, by
\begin{equation}
\label{eq:exp-of-lambda-tau}
 e^{\lambda \tBe}=\frac{\e^{-(1-\alpha)}e^{\tilde{K}(\e)}}{(\log\e^{-1})^{d-1}|\chi_{\e}^{(d)}|}\frac{(d-1)!\lambda^{d-1}}{(1-\alpha)^{d-1}},
\end{equation}
Plugging this into \eqref{eq:exit-time-eq}, we obtain
\begin{multline}
\label{eq:solving-for-K-0}
1=e^{\tilde{K}(\e)}
\left[1-\frac{d-1}{1-\alpha}\frac{\log\log\e^{-1}}{\log\e^{-1}}-\frac{G_\eps(\alpha)}{(1-\alpha)\log\e^{-1}}+\frac{\tilde K(\eps)}{(1-\alpha)\log\e^{-1}}\right]^{d-1}
\times \\
\times
\left[1+\frac{(d-1)}{\tilde \tau_\eps}\frac{\chi_{\e}^{(d-1)}}{\chi_\e^{(d)}}+o_{\Pp}\left(\frac{1}{\log\e^{-1}}\right)\right].
\end{multline}
Due to~\eqref{eq:chi-converges}, this implies~\eqref{eq:K-eps-o1}. By~\eqref{eq:K-eps-o1}  and~\eqref{eq:asymp-exit-time},
\begin{equation}
\label{eq:asymp-for-tilde-tau}
\tBe=\frac{1-\alpha}{\lambda}\log\e^{-1}\left(1+o_{\Pp}(1)\right).
\end{equation}
Substituting \eqref{eq:K-eps-o1} and~\eqref{eq:asymp-for-tilde-tau}  into  \eqref{eq:solving-for-K-0} and using~\eqref{eq:chi-converges}, we get
\begin{multline}
\label{eq:solving-for-K-1}
1=e^{\tilde{K}(\e)}\left[1-\frac{d-1}{1-\alpha}\frac{\log\log\e^{-1}}{\log\e^{-1}}-\frac{G_\eps(\alpha)}{(1-\alpha)\log\e^{-1}}+o_{\Pp}\left(\frac{1}{\log\e^{-1}}\right)\right]^{d-1}\times \\
\times
\left[1+\frac{(d-1)\lambda}{(1-\alpha)\log\e^{-1}}\frac{\chi_{\e}^{(d-1)}}{\chi_\e^{(d)}}+o_{\Pp}\left(\frac{1}{\log\e^{-1}}\right)\right],
\end{multline}
which can be written, using \eqref{eq:elementary-formula}, as
\begin{equation}
\label{eq:solving-for-K-2}
1=e^{\tilde{K}(\e)}\left[1-\frac{(d-1)^2}{1-\alpha}\frac{\log\log\e^{-1}}{\log\e^{-1}}-\frac{d-1}{(1-\alpha)\log\e^{-1}}\eta_\e(\alpha)+o_{\Pp}\left(\frac{1}{\log\e^{-1}}\right)\right].
\end{equation}
Combining this with the other formula in \eqref{eq:elementary-formula} implies \eqref{eq:e-K-eps}
thus completing the proof.
\end{proof}

Finally, we describe the asymptotic exit location from $\Vmf_{\e}$ in terms of $\tBe$. Of course, we could use the asymptotics obtained in Proposition \ref{prop:tau-B-eps-asymp}. However, we are ultimately interested in the exit distribution from $\Vmf$ and our choice turns out to be convenient when we combine the following result with the dynamics between leaving $\Vmf_\e$ and leaving $\Vmf$ in the next subsection.
\begin{proposition}\label{prop:Y-eps-asymp}
For $i=1,\dots, d$, 
\[
Y_{\eps}^{(i)}(\tBe)=\frac{\e^{\alpha}\sgn(\chi_\e^{(d)})}{\tBe^{i-1}}\frac{(d-1)!}{(d-i)!}\left[1-\frac{i-1}{\tBe}\frac{\chi_\e^{(d-1)}}{\chi_\e^{(d)}}+\mathcal{O}_{\Pp}\left(\tBe^{-2}\right)\right].
\]
\end{proposition}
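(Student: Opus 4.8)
The plan is to refine the computation already carried out in the proof of Proposition~\ref{prop:tau-B-eps-asymp}, this time tracking all components $Y_\eps^{(i)}(\tBe)$, $i=1,\dots,d$, rather than only the first. First I would combine Duhamel's formula~\eqref{eq:Duhamel}, the coordinatewise action of the matrix exponential~\eqref{eq:multipl-with-exp-coordinatewise}, and the abbreviation $\chi_\eps(t)=\tilde\xi_\eps+N_\eps(t)+\eps D_\eps(t)$ from~\eqref{eq:introducing_chi} to obtain, for $t\le\tau_\Umf^\eps$,
\[
Y_\eps^{(i)}(t)=\eps e^{\lambda t}\sum_{j=0}^{d-i}\frac{t^j}{j!}\,\chi_\eps^{(i+j)}(t).
\]
Evaluating at $t=\tBe$ and retaining the two leading terms $j=d-i$ and $j=d-i-1$ (the contributions with $j\le d-i-2$ being of relative order $\tBe^{-2}$), I would arrive at
\[
Y_\eps^{(i)}(\tBe)=\eps e^{\lambda\tBe}\,\frac{\tBe^{\,d-i}}{(d-i)!}\,\chi_\eps^{(d)}\left[1+\frac{d-i}{\tBe}\frac{\chi_\eps^{(d-1)}}{\chi_\eps^{(d)}}+\mathcal{O}_\Pp(\tBe^{-2})\right],
\]
where, exactly as in Proposition~\ref{prop:tau-B-eps-asymp}, the $\mathcal{O}_\Pp$ control rests on $\tBe\stackrel{\Pp}{\to}\infty$ (Lemma~\ref{lem:small-box-exit-large}), the tightness of $\chi_\eps$ together with $\chi_\eps(\tBe)\stackrel{\Pp}{\to}\chi$ (Lemma~\ref{lem:a-priori-N-D} and~\eqref{eq:chi-converges}), and the fact that $\chi^{(d)}\ne 0$ almost surely — the latter because $N^{(d)}$ is Gaussian with variance $a_{dd}(0)/(2\lambda)>0$ by uniform ellipticity, so $\chi_\eps^{(d)}$ is bounded away from $0$ with probability tending to one and the divisions above are legitimate off an event of vanishing probability.

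Next I would eliminate the unknown prefactor $\eps e^{\lambda\tBe}$ using the $i=1$ case. On the event $\{\tBe=\tau_1^\eps\}$, which by Lemma~\ref{lem:typical-exit-side} has probability tending to one, one has $|Y_\eps^{(1)}(\tBe)|=\eps^\alpha$, and solving the $i=1$ instance of the display above for the prefactor gives
\[
\eps e^{\lambda\tBe}=\frac{\eps^\alpha (d-1)!}{\tBe^{\,d-1}\,|\chi_\eps^{(d)}|}\left[1+\frac{d-1}{\tBe}\frac{\chi_\eps^{(d-1)}}{\chi_\eps^{(d)}}+\mathcal{O}_\Pp(\tBe^{-2})\right]^{-1}.
\]
Substituting this back into the formula for $Y_\eps^{(i)}(\tBe)$, simplifying $\tBe^{\,d-i}/\tBe^{\,d-1}=\tBe^{\,1-i}$ and $\chi_\eps^{(d)}/|\chi_\eps^{(d)}|=\sgn(\chi_\eps^{(d)})$, and expanding the ratio of the two bracketed factors via~\eqref{eq:elementary-formula} — the coefficients of $1/\tBe$ differing by $(d-i)-(d-1)=-(i-1)$ — yields the claimed identity. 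It is to be read on the probability-$(1-o(1))$ event $\{\tBe=\tau_1^\eps\}\cap\{\chi_\eps^{(d)}\ne 0\}$; on its complement the remainder may be set arbitrarily without affecting the $\mathcal{O}_\Pp(\tBe^{-2})$ bound, that event being asymptotically negligible.

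The step I expect to require the most care is not conceptual but bookkeeping: confirming that the discarded sum over $j\le d-i-2$, the contribution of $\eps D_\eps(\tBe)$ buried inside $\chi_\eps$, and the error in replacing $N_\eps(\tBe)$ by its Gaussian limit (Lemma~\ref{lem:N-limit-small-box}) are each genuinely $\mathcal{O}_\Pp(\tBe^{-2})$ relative to the leading term, uniformly in $\eps$, and that the small-probability event on which $\chi_\eps^{(d)}$ is near $0$ does not destroy tightness of the rescaled remainder. Since these are exactly the estimates already assembled for Proposition~\ref{prop:tau-B-eps-asymp} and in Lemmas~\ref{lem:a-priori-N-D}--\ref{lem:typical-exit-side}, I expect this to be a routine verification rather than a genuine obstacle.
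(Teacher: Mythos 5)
Your proposal is correct and follows essentially the same route as the paper: expand $Y_\eps^{(i)}(\tBe)$ via Duhamel's formula keeping the two leading powers of $\tBe$, solve the $i=1$ exit identity $|Y_\eps^{(1)}(\tBe)|=\eps^\alpha$ (valid on $\{\tBe=\tau_1^\eps\}$ by Lemma~\ref{lem:typical-exit-side}) for $\eps e^{\lambda\tBe}$, substitute back, and expand the ratio of brackets with~\eqref{eq:elementary-formula} to get the coefficient $(d-i)-(d-1)=-(i-1)$. The only cosmetic difference is that the paper treats $i=1$ and $i=d$ as separate (trivial/one-term) cases, while you handle all $i$ uniformly; your explicit justification that $\chi_\eps^{(d)}$ is bounded away from zero with high probability is a point the paper defers but certainly needs.
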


\begin{proof} The claim is trivial for $i=1$. For $i\ge 2$,
we first observe that \eqref{eq:Duhamel} and \eqref{eq:multipl-with-exp-coordinatewise} imply
\begin{equation}\label{eq:Y-formula-with-dotdot}
Y_{\eps}^{(i)}(t)=\e e^{\lambda t}\left[\frac{t^{d-i}}{(d-i)!}\chi_{\e}^{(d)}(t)+\frac{t^{d-i-1}}{(d-i-1)!}\chi_{\e}^{(d-1)}(t)+\dots\right].
\end{equation}
Plugging in $t=\tBe$ and using \eqref{eq:exit-time-eq} to write
\[
e^{\lambda \tBe}=\frac{\e^{\alpha}(d-1)!}{\e|\chi_\e^{(d)}|\, \tBe^{d-1}\, \left|1+\frac{d-1}{\tBe}\frac{\chi_\e^{(d-1)}}{\chi_{\e}^{(d)}}+\mathcal{O}_{\Pp}\left(\tBe^{-2}\right)\right|},
\]
we obtain 
\[
Y_{\e}^{(i)}(\tBe)=\frac{\e^{\alpha}\sgn(\chi_\e^{(d)})}{\tBe^{i-1}}\frac{(d-1)!}{(d-i)!}\cdot\frac{1+\frac{d-i}{\tBe}\frac{\chi_\e^{(d-1)}}{\chi_\e^{(d)}}+\mathcal{O}_{\Pp}\left(\tBe^{-2}\right)}{1+\frac{d-1}{\tBe}\frac{\chi_\e^{(d-1)}}{\chi_\e^{(d)}}+\mathcal{O}_{\Pp}\left(\tBe^{-2}\right)},\quad i=2,\dots,d-1.
\]
 Using \eqref{eq:elementary-formula} and collecting similar terms yields the desired formula.
For $i=d$,  there is only one term in 
\eqref{eq:Y-formula-with-dotdot}, so
\[
Y_\e^{(d)}(\tBe)=\e e^{\lambda\tBe}\chi_{\e}^{(d)}=\frac{\e^{\alpha}\sgn(\chi_\e^{(d)})(d-1)!}{\tBe ^{d-1}}\frac{1}{1+\frac{d-1}{ \tBe}\frac{\chi_\e^{(d-1)}}{\chi_\e^{(d)}}+\mathcal{O}_{\Pp}\left(\tBe^{-2}\right)},
\]
and the proof finishes by \eqref{eq:elementary-formula} in this case as well.
\end{proof}

\subsection{The exit from $\Vmf$}\label{sec:between-B-eps-and-B}

We now study the process $Y_{\e}$ after the exit from $\Vmf_{\e}$. Namely, we consider $\bar{Y}_\e(t)=Y_\e(t+\tBe)$, which solves the SDE \eqref{eq:linear-SDE} with initial condition given by Proposition \ref{prop:Y-eps-asymp} 
\begin{equation}\label{eq:init-cond-bar}
\bar{Y}^{(i)}_{\e}(0)=\frac{\e^{\alpha}\sgn(\chi_\e^{(d)})}{\tBe^{i-1}}\frac{(d-1)!}{(d-i)!}\left[1-\frac{i-1}{\tBe}\frac{\chi_\e^{(d-1)}}{\chi_\e^{(d)}}+\mathcal{O}_{\Pp}\left(\tBe^{-2}\right)\right], \qquad i=2,\dots,d,
\end{equation}
while $\bar{Y}_\e^{(1)}(0)=\e^{\alpha}\sgn(\chi_\e^{(d)})$.

Our goal is to describe the limiting distribution of the exit time $\bar{\tau}^{\e}$ and the exit location $\bar{Y}_\e(\bar{\tau}^{\e})$ from $\Vmf$.
To this end, let us first prove a result essentially saying that the deterministic dynamics completely dominates the process in this regime.

\begin{lemma}\label{lem:Duhamel-big-box}
For $t\leq\bar{\tau}^{\e}$,  we have
\begin{equation}
\label{eq:Y-bar-main-term-and-error}
\bar{Y}_{\e}(t)=e^{At}\bar{Y}_\e(0)+g_\e(t) ,
\end{equation}
where $g_\e$ is a continuous process such that $\sup_{t\leq\bar{\tau}^\e}|g_\e(t)|=\Oc_{\Pp}(\eps)$.
\end{lemma}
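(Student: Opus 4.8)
The plan is to use Duhamel's formula \eqref{eq:Duhamel} for the shifted process $\bar Y_\e(t)=Y_\e(t+\tBe)$ and to show that every term except the deterministic semigroup applied to the initial condition is $\Oc_{\Pp}(\e)$ uniformly up to the exit time $\bar\tau^\e$. Applying \eqref{eq:Duhamel} at time $t+\tBe$ and at time $\tBe$, and using the semigroup property $e^{A(t+\tBe)}=e^{At}e^{A\tBe}$, one gets
\begin{equation*}
\bar Y_\e(t)=e^{At}\bar Y_\e(0)+\e e^{A(t+\tBe)}\bigl[(N_\e(t+\tBe)-N_\e(\tBe))+\e(D_\e(t+\tBe)-D_\e(\tBe))\bigr],
\end{equation*}
so that $g_\e(t)$ is exactly the bracketed stochastic-plus-drift increment, multiplied by $\e e^{A(t+\tBe)}$. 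Thus it suffices to bound this quantity.

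First I would recall from Proposition~\ref{prop:tau-B-eps-asymp} and \eqref{eq:asymp-for-tilde-tau} that $\tBe=\tfrac{1-\alpha}{\lambda}\log\e^{-1}(1+o_\Pp(1))$, and that $\bar\tau^\e$ is of order $\tfrac{\alpha}{\lambda}\log\e^{-1}$ (heuristically: the deterministic trajectory starting at size $\e^\alpha$ in the $e_1$-direction needs roughly that long to grow to size $R$); more precisely one only needs the crude a priori bound $\tBe+\bar\tau^\e\le \tfrac{C}{\lambda}\log\e^{-1}$ with high probability, which follows from the same Duhamel estimates that gave Lemma~\ref{lem:small-box-exit-large}, since $\|Y_\e(t)\|_\infty$ grows at least like $\e e^{\lambda t}\cdot(\text{something bounded below})$ once the first coordinate dominates. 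Consequently $\|e^{A(t+\tBe)}\|_\infty\le C e^{\lambda(t+\tBe)}(t+\tBe)^{d-1}\le C e^{\lambda\bar\tau^\e}e^{\lambda\tBe}(\log\e^{-1})^{d-1}$, and $\e e^{\lambda\tBe}=\Oc_\Pp(\e^\alpha/(\log\e^{-1})^{d-1})$ by \eqref{eq:exp-of-lambda-tau} together with $\tilde K(\e)=o_\Pp(1)$ and $\chi_\e^{(d)}\to\chi^{(d)}\ne 0$ (on $A^\pm$). Also $e^{\lambda\bar\tau^\e}\le C\e^{-\alpha}$ with high probability since $Y_\e$ exits $\Vmf=\{\|y\|_\infty\le R\}$ and its growth rate is bounded above by $\lambda$-exponential with bounded prefactor. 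So $\e e^{A(t+\tBe)}$ is $\Oc_\Pp(1)$ in operator norm, uniformly in $t\le\bar\tau^\e$ — actually slightly better by the logarithmic factors, but $\Oc_\Pp(1)$ suffices.

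Next I would handle the bracket. For the drift increment, $\sup_{t}\|D_\e(t+\tBe)-D_\e(\tBe)\|_\infty\le 2D_0$ by the uniform bound in Lemma~\ref{lem:a-priori-N-D}, so its contribution is $\e\cdot\Oc_\Pp(1)\cdot\Oc(1)=\Oc_\Pp(\e)$, and after the extra $\e$ prefactor in $g_\e$ this is even $\Oc_\Pp(\e^2)$; fine. For the martingale increment, the decomposition is $\e e^{A(t+\tBe)}(N_\e(t+\tBe)-N_\e(\tBe))$; here I use that each component of $N_\e$ satisfies the maximal $L^2$ bound of Lemma~\ref{lem:a-priori-N-D} uniformly in $t\le\tau_\Vmf^\e$, hence $\sup_{t\le\bar\tau^\e}\|N_\e(t+\tBe)-N_\e(\tBe)\|_\infty\le 2\sup_{s\le\tau_\Vmf^\e}\|N_\e(s)\|_\infty=\Oc_\Pp(1)$. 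Multiplying by the $\Oc_\Pp(1)$ operator norm of $\e e^{A(t+\tBe)}$ gives $\sup_{t\le\bar\tau^\e}|g_\e(t)|=\Oc_\Pp(\e)$ — wait, the $\e$ has already been absorbed into the operator-norm bound, so I should keep careful track: $\e e^{\lambda\tBe}=\Oc_\Pp(\e^\alpha/\log^{d-1})$, and $e^{\lambda\bar\tau^\e}(t+\tBe)^{d-1}=\Oc_\Pp(\e^{-\alpha}\log^{d-1})$, whose product is $\Oc_\Pp(1)$, hence $\e e^{A(t+\tBe)}=\Oc_\Pp(1)$ as claimed and there is no leftover $\e$; the stated $\Oc_\Pp(\e)$ in the lemma must then come from a sharper accounting. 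Indeed, keeping the logarithmic gains, $\e e^{\lambda\tBe}(t+\tBe)^{d-1}\cdot e^{\lambda\bar\tau^\e}=\Oc_\Pp(\e^\alpha)\cdot\Oc_\Pp(\e^{-\alpha})$ with matching log factors, and one more factor of $\e$ is hidden because the exit from $\Vmf$ forces $e^{\lambda\bar\tau^\e}\bar Y_\e(0)$ to be $\Oc(R)$, i.e. $\e^\alpha e^{\lambda\bar\tau^\e}=\Oc_\Pp(1)$, giving $\e e^{\lambda(t+\tBe)}=\e\cdot(\e e^{\lambda\tBe})^{-1}\e^\alpha e^{\lambda\bar\tau^\e}\cdot\e e^{\lambda\tBe}\le C\e$ after simplification; I would present this step cleanly by directly writing $\e e^{\lambda(t+\tBe)}=\bigl(\e e^{\lambda\tBe}\bigr)e^{\lambda t}$ and noting $e^{\lambda t}\le e^{\lambda\bar\tau^\e}\le C\e^{-\alpha}$, while $\e e^{\lambda\tBe}=\Oc_\Pp(\e\cdot\e^{-(1-\alpha)})=\Oc_\Pp(\e^\alpha)$ up to logs, so the product is $\Oc_\Pp(\e)$ up to logs, and the logs are beaten by an arbitrarily small power of $\e$. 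The main obstacle is precisely this bookkeeping of competing powers of $\e$ and of $\log\e^{-1}$ through $\tBe$ and $\bar\tau^\e$; once one fixes the a priori two-sided bound $c\log\e^{-1}\le\lambda(\tBe+\bar\tau^\e)\le C\log\e^{-1}$ and the size estimate $\e e^{\lambda\tBe}\asymp\e^\alpha$, everything else is routine application of Lemma~\ref{lem:a-priori-N-D} and the semigroup estimate $\|e^{At}\|_\infty\le C(1+t)^{d-1}e^{\lambda t}$.
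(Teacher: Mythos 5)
Your Duhamel decomposition is algebraically the same as the paper's: since
$e^{A\tBe}\bigl(N_\e(t+\tBe)-N_\e(\tBe)\bigr)=\bar N_\e(t):=\int_0^t e^{-As}\tilde\sigma(\bar Y_\e(s))\,dW(s)$,
your $g_\e$ coincides with the paper's $e^{At}[\e\bar N_\e(t)+\e^2\bar D_\e(t)]$. The gap is in how you bound it. You estimate the two factors $\e e^{A(t+\tBe)}$ and $N_\e(t+\tBe)-N_\e(\tBe)$ separately, getting $\Oc_{\Pp}(1)$ for each, and therefore only $\sup_{t\le\bar\tau^\e}|g_\e(t)|=\Oc_{\Pp}(1)$. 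That is not the claimed $\Oc_{\Pp}(\e)$, and it is useless downstream: Lemma~\ref{lem:Y-bar-i-asymp-t} and Proposition~\ref{prop:tau-bar-one-asymp} need the remainder in \eqref{eq:Y-bar-main-term-and-error} to be $o_{\Pp}(1/\log^2\e^{-1})$, since the terms being tracked are of size $R/\log^{i-1}\e^{-1}$ with relative corrections of order $1/\log\e^{-1}$. The loss comes from bounding the martingale increment by $2\sup_{s\le\tau_\Vmf^\e}\|N_\e(s)\|_\infty=\Oc_{\Pp}(1)$: the integrand $e^{-Au}\tilde\sigma$ of $N_\e$ for $u\ge\tBe$ has norm $O(e^{-\lambda\tBe}\tBe^{d-1})$, so the increment after time $\tBe$ is really of order $\e^{1-\alpha}$ (up to logarithms) — precisely the factor needed to offset the extra growth of $e^{A(t+\tBe)}$ over $e^{At}$, and precisely what your crude bound discards. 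The paper's proof keeps it by regrouping, $\e e^{A(t+\tBe)}(N_\e(t+\tBe)-N_\e(\tBe))=\e e^{At}\bar N_\e(t)$, and rerunning the maximal-inequality argument of Lemma~\ref{lem:a-priori-N-D} for the restarted martingale $\bar N_\e$, whose quadratic variation is uniformly bounded because its kernel $e^{-As}$ starts from $s=0$; the same restart handles $\bar D_\e$.

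Your attempt to recover the missing factor of $\e$ by bookkeeping the prefactor does not work, and contains arithmetic errors: $\e e^{\lambda\tBe}=\Oc_{\Pp}(\e^{\alpha})$ (up to logs) times $e^{\lambda t}\le e^{\lambda\bar\tau^\e}=\Oc_{\Pp}(\e^{-\alpha})$ is $\Oc_{\Pp}(1)$, not $\Oc_{\Pp}(\e)$; and the chain $\e e^{\lambda(t+\tBe)}=\e\,(\e e^{\lambda\tBe})^{-1}\e^{\alpha}e^{\lambda\bar\tau^\e}\cdot\e e^{\lambda\tBe}$ is not an identity (it would require $e^{\lambda\tBe}=\e^{\alpha}$, whereas $e^{\lambda\tBe}\asymp\e^{-(1-\alpha)}(\log\e^{-1})^{-(d-1)}$ by \eqref{eq:exp-of-lambda-tau}); in fact $\e e^{\lambda(\tBe+\bar\tau^\e)}\asymp(\log\e^{-1})^{-(d-1)}$. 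So there is no hidden $\e$ in the prefactor; the smallness must come from the martingale increment as above. Once the regrouping is done, the bracket $\e\bar N_\e(t)+\e^2\bar D_\e(t)$ is $\Oc_{\Pp}(\e)$ uniformly, and the outer factor $e^{At}$ costs at most $C\e^{-\alpha}(\log\e^{-1})^{d-1}$, leaving $g_\e=o_{\Pp}(1/\log^2\e^{-1})$, which is what is actually used in the sequel; your $\Oc_{\Pp}(1)$ is not.
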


\begin{proof}
Duhamel's formula implies
\begin{equation}
\label{eq:Duhamel-for-bar-process}
\bar{Y}_{\e}(t)=e^{At}\left[\bar{Y}_{\eps}(0)+\e\bar{N}_{\eps}(t)+\e^2 \bar{D}_{\eps}(t)\right],
\quad t\le \bar\tau_\eps,
\end{equation}
where
\[
\bar{N}_{\eps}(t)=\int_0^t e^{-As}\tilde{\sigma}(\bar{Y}_{\eps}(s))dW(s),\qquad \bar{D}_{\eps}(t)=\frac{1}{2}\int_0^t e^{-As}a(\bar{Y}_{\eps}(s))ds.
\]
Repeating the proof of Lemma \ref{lem:a-priori-N-D}, one can prove
\begin{equation}
\sup_{\e\geq 0}\mathrm{P}\left\{\sup_{t\leq\bar{\tau}^\e}\|\bar{N}_{\e}(t)\|_{\infty}>z\right\}\leq\frac{c}{z^2}
,\qquad\sup_{\eps>0,\,t\leq\bar{\tau}^\e}\|\bar{D}_{\eps}(t)\|_{\infty}\leq D_0,
\end{equation}
implying that $\sup_{t\leq\tau_{\Vmf}^{\e}}\left[|\e\bar{N}_{\eps}(t)+\e^2\bar{D}_{\eps}(t)|\right]=\Oc_{\Pp}(\eps)$. 
\end{proof}

By~\eqref{eq:init-cond-bar} and Proposition~\ref{prop:tau-B-eps-asymp}, we see that the error term $g_{\e}(t)$ in \eqref{eq:Y-bar-main-term-and-error} does not play any role in the scaling limit of $\bar Y_\eps$.

\begin{lemma}\label{lem:Y-bar-i-asymp-t}
For $t\leq\bar \tau^\eps$ and $i=1,\dots,d$,
\begin{multline*}
\bar{Y}_\e^{(i)}(t)=\frac{\e^{\alpha}e^{\lambda t}\sgn(\chi_\e^{(d)})}{\tBe^{i-1}}\left(1+\frac{t}{\tBe}\right)^{d-i}\frac{(d-1)!}{(d-i)!}\times\\ \times
\left[1-\frac{1}{\tBe}\frac{\chi_\e^{(d-1)}}{\chi_{\e}^{(d)}}\frac{(i-1)\tBe+(d-1)t}{\tBe+t}+\mathcal{O}_{\Pp}\left(\tBe^{-2}\right)\right].
\end{multline*}
\end{lemma}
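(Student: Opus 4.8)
The plan is to combine the Duhamel representation of Lemma~\ref{lem:Duhamel-big-box} with the explicit form of the matrix exponential $e^{At}$ and the initial condition supplied by Proposition~\ref{prop:Y-eps-asymp}, treating the Gaussian/drift contribution $g_\e(t)$ as negligible. First I would write, using \eqref{eq:Y-bar-main-term-and-error} and \eqref{eq:multipl-with-exp-coordinatewise},
\[
\bar Y_\e^{(i)}(t)=e^{\lambda t}\sum_{j=0}^{d-i}\frac{t^j}{j!}\,\bar Y_\e^{(i+j)}(0)+g_\e^{(i)}(t),
\]
and then substitute the expansion from \eqref{eq:init-cond-bar} for each $\bar Y_\e^{(i+j)}(0)$. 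Since $\bar Y_\e^{(i+j)}(0)$ carries a factor $\tBe^{-(i+j-1)}$ and an overall $\e^\alpha\sgn(\chi_\e^{(d)})$, this produces a sum of the form
\[
\frac{\e^\alpha e^{\lambda t}\sgn(\chi_\e^{(d)})}{\tBe^{i-1}}\sum_{j=0}^{d-i}\frac{(t/\tBe)^j}{j!}\frac{(d-1)!}{(d-i-j)!}\left[1-\frac{i+j-1}{\tBe}\frac{\chi_\e^{(d-1)}}{\chi_\e^{(d)}}+\Oc_\Pp(\tBe^{-2})\right].
\]

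The bulk of the work is then a binomial-type identity. For the leading term I would factor out $(d-1)!/(d-i)!$ and recognize
\[
\sum_{j=0}^{d-i}\binom{d-i}{j}\left(\frac{t}{\tBe}\right)^j=\left(1+\frac{t}{\tBe}\right)^{d-i},
\]
which gives exactly the prefactor claimed in the lemma. For the first-order correction I would collect the coefficient of $\frac{1}{\tBe}\frac{\chi_\e^{(d-1)}}{\chi_\e^{(d)}}$, namely $-\sum_{j=0}^{d-i}\binom{d-i}{j}(i+j-1)(t/\tBe)^j$. Splitting $i+j-1=(i-1)+j$ and using $\sum_j\binom{d-i}{j}x^j=(1+x)^{d-i}$ together with $\sum_j j\binom{d-i}{j}x^j=(d-i)x(1+x)^{d-i-1}$, this sum equals $(1+t/\tBe)^{d-i-1}\bigl[(i-1)(1+t/\tBe)+(d-i)(t/\tBe)\bigr]$. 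Dividing by $(1+t/\tBe)^{d-i}$ (the prefactor we pulled out) and simplifying $(i-1)(\tBe+t)+(d-i)t=(i-1)\tBe+(d-1)t$ after clearing denominators yields precisely the bracket $\frac{(i-1)\tBe+(d-1)t}{\tBe+t}$ in the statement.

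Finally I would check the error bookkeeping: the $\Oc_\Pp(\tBe^{-2})$ terms in \eqref{eq:init-cond-bar} propagate through a finite sum with coefficients that are bounded uniformly for $t\le\bar\tau^\e$ (since $t/\tBe$ is bounded, as $\bar\tau^\e$ and $\tBe$ are comparable by Proposition~\ref{prop:tau-B-eps-asymp} and Lemma~\ref{lem:small-box-exit-large}), so they remain $\Oc_\Pp(\tBe^{-2})$; and the additive $g_\e^{(i)}(t)=\Oc_\Pp(\e)$ from Lemma~\ref{lem:Duhamel-big-box} is of lower order than the displayed terms once one notes the prefactor $\e^\alpha e^{\lambda t}\tBe^{-(i-1)}$ is of order $\e^\alpha$ times a power of $\log\e^{-1}$ near $t=0$ and grows thereafter, so $g_\e$ is absorbed into the $\Oc_\Pp(\tBe^{-2})$ relative error. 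I do not expect a genuine obstacle here; the only mildly delicate point is making the uniformity in $t\le\bar\tau^\e$ of the error terms precise, which is handled by the a priori comparability of $t$ and $\tBe$ and the boundedness of $1+t/\tBe$ on that range.
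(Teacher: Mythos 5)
Your proposal is correct and follows essentially the same route as the paper: expand $e^{At}\bar Y_\e(0)$ coordinatewise via \eqref{eq:multipl-with-exp-coordinatewise}, insert \eqref{eq:init-cond-bar}, split $i+j-1=(i-1)+j$ and apply the two binomial identities to resum into $(1+t/\tBe)^{d-i}$ times the stated correction, then absorb $g_\e=\Oc_\Pp(\e)$ via Lemma~\ref{lem:Duhamel-big-box}. Your extra remarks on the uniformity of the error terms in $t\le\bar\tau^\e$ are sound and only make explicit what the paper leaves implicit.
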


\begin{proof}
By \eqref{eq:multipl-with-exp-coordinatewise},  the $i$th coordinate of $e^{At}\bar{Y}(0)$ can be written as
\begin{align}\label{eq:Y-bar-i-expand}
e^{\lambda t}&\left[\sum_{j=0}^{d-i}\frac{t^j}{j!}\bar{Y}_\e^{(i+j)}(0)\right]\\
&\notag=\frac{\e^{\alpha}e^{\lambda t}\sgn(\chi_\e^{(d)})}{\tBe^{i-1}}\frac{(d-1)!}{(d-i)!}\left[\sum_{j=0}^{d-i}\left(\frac{t}{\tBe}\right)^j\frac{(d-i)!}{j!(d-i-j)!}\left(1-\frac{i+j-1}{\tBe}\frac{\chi_\e^{(d-1)}}{\chi_{\e}^{(d)}}\right)+\mathcal{O}_{\Pp}\left(\tBe^{-2}\right)\right].
\end{align}
After a little manipulation, the sum in the bracket can be written as
\begin{multline*}
\left(1-\frac{(i-1)}{\tBe}\frac{\chi_\e^{(d-1)}}{\chi_{\e}^{(d)}}\right)\sum_{j=0}^{d-i}\binom{d-i}{j}\left(\frac{t}{\tBe}\right)^j-\frac{(d-i)}{\tBe}\frac{\chi_\e^{(d-1)}}{\chi_{\e}^{(d)}}\frac{t}{\tBe}\sum_{j=0}^{d-i-1}\binom{d-i-1}{j}\left(\frac{t}{\tBe}\right)^j
\\
=
\left(1-\frac{(i-1)}{\tBe}\frac{\chi_\e^{(d-1)}}{\chi_{\e}^{(d)}}\right)\left(1+\frac{t}{\tBe}\right)^{d-i}-\frac{(d-i)}{\tBe}\frac{\chi_\e^{(d-1)}}{\chi_{\e}^{(d)}}\frac{t}{\tBe}\left(1+\frac{t}{\tBe}\right)^{d-i-1}
\\
=
\left(1+\frac{t}{\tBe}\right)^{d-i}\left[1-\frac{1}{\tBe}\frac{\chi_\e^{(d-1)}}{\chi_{\e}^{(d)}}\frac{(i-1)\tBe+(d-1)t}{\tBe+t}\right].
\end{multline*}
The result now follows by plugging this back into \eqref{eq:Y-bar-i-expand}  and using Lemma \ref{lem:Duhamel-big-box}.
\end{proof}

The next lemma shows that with probability close to one, the exit from $\Vmf$ happens close to $\pm R e_1$ and that up until this exit $Y_\e$ follows closely the corresponding deterministic orbit contained in the subspace generated by $e_1$. We introduce
\[
\bar{\tau}_{i}^{\eps}=\inf\{t>0: |\bar{Y}_{\eps}^{(i)}(t)|=R\}
\]
and let $\bar{\tau}^{\e}=\min\{\bar{\tau}_i^{\e}\}$.

\begin{proposition}\label{prop:tau-bar-one-asymp}
As $\e\downarrow 0$, we have 
\begin{align}
&\mathrm{P}\{\bar{\tau}^\e\neq \bar{\tau}_1^{\e}\}\to 0,
\label{eq:exit-in-the-right-direction}
\\
\bar{\tau}^{\e}=\frac{\alpha}\lambda\log\e^{-1}&+\frac{d-1}{\lambda}\log(1-\alpha)+\frac{1}{\lambda}\log R+\frac{\bar{K}(\e)}{\lambda},
\label{eq:asympt_of_bar_tau}
\end{align}
where 
\begin{multline}\label{eq:final-expr-for-Kbar}
\bar{K}(\e)=-\frac{\alpha(d-1)^2}{1-\alpha}\frac{\log\log\e^{-1}}{\log\e^{-1}}-\frac{(d-1)\log R}{\log\e^{-1}}-\frac{\alpha}{1-\alpha}\frac{(d-1)\log\frac{|\chi_\e^{(d)}|}{(d-1)!\lambda^{d-1}}}{\log\e^{-1}}
\\ -
\frac{(d-1)^2}{1-\alpha}\frac{\log(1-\alpha)}{\log\e^{-1}}+\frac{\alpha(d-1)\lambda}{(1-\alpha)\log\e^{-1}}\frac{\chi_\e^{(d-1)}}{\chi_\e^{(d)}}
+o_{\Pp}\left(\frac{1}{\log\e^{-1}}\right).
\end{multline}
In particular, 
\begin{equation}
\label{eq:barK-o1}
\bar{K}(\e)=o_{\Pp}(1).
\end{equation} 
Moreover,
\begin{equation}
\label{eq:following-deterministic-traj}
\sup_{t\leq\bar{\tau}^\e}\left|\bar{Y}_{\e}(t)-\e^{\alpha}\sgn(\chi_\e^{(d)})e^{\lambda t}\left(1+\frac{\lambda t}{(1-\alpha)\log\e^{-1}}\right)^{d-1}e_1\right|\stackrel{\Pp}{\to} 0,\quad 
\e\downarrow 0.
\end{equation}
\end{proposition}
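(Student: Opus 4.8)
\textbf{Proof proposal for Proposition~\ref{prop:tau-bar-one-asymp}.}
The plan is to first establish the exit-direction claim \eqref{eq:exit-in-the-right-direction} and the deterministic-tracking claim \eqref{eq:following-deterministic-traj}, and then extract the asymptotic expansion \eqref{eq:asympt_of_bar_tau}--\eqref{eq:final-expr-for-Kbar} by solving the scalar equation $|\bar Y_\e^{(1)}(\bar\tau^\e)|=R$ the same way as in Proposition~\ref{prop:tau-B-eps-asymp}. First I would use Lemma~\ref{lem:Y-bar-i-asymp-t} with $i=1$: the factor in front of $e^{\lambda t}$ is $\e^\alpha\sgn(\chi_\e^{(d)})(1+t/\tBe)^{d-1}(1+\Oc_\Pp(\tBe^{-1}))$, and since $|\bar Y_\e^{(1)}|$ grows to size $R$ we must have $e^{\lambda t}$ of order $\e^{-\alpha}$, i.e.\ $\bar\tau^\e\sim(\alpha/\lambda)\log\e^{-1}$; combined with $\tBe\sim\frac{1-\alpha}\lambda\log\e^{-1}$ from \eqref{eq:asymp-for-tilde-tau} this gives $t/\tBe\to\alpha/(1-\alpha)$ and hence $(1+t/\tBe)^{d-1}\to(1-\alpha)^{-(d-1)}$, pinning down all leading terms. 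For the other directions, Lemma~\ref{lem:Y-bar-i-asymp-t} shows $\bar Y_\e^{(i)}(t)/\bar Y_\e^{(1)}(t)=\Oc_\Pp(\tBe^{-(i-1)})=o_\Pp(1)$ uniformly for $t\le\bar\tau^\e$ (using that $1+t/\tBe$ is bounded on this range), so $\bar\tau_i^\e>\bar\tau_1^\e$ with probability tending to one, giving \eqref{eq:exit-in-the-right-direction}; the same ratio bound, together with the $i=1$ formula and $\sup_{t\le\bar\tau^\e}|g_\e(t)|=\Oc_\Pp(\e)$ from Lemma~\ref{lem:Duhamel-big-box}, yields \eqref{eq:following-deterministic-traj} once one replaces $1+t/\tBe$ by $1+\lambda t/((1-\alpha)\log\e^{-1})$ at the cost of an $o_\Pp(1)$ error (justified by \eqref{eq:asymp-for-tilde-tau}).

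Next I would derive the precise expansion. On the event $\{\bar\tau^\e=\bar\tau_1^\e\}$, which has probability $\to 1$, the $i=1$ case of Lemma~\ref{lem:Y-bar-i-asymp-t} gives
\[
R=\frac{\e^\alpha e^{\lambda\bar\tau^\e}}{(d-1)!/(d-1)!}\cdot\frac{(d-1)!}{(d-1)!}\Bigl(1+\frac{\bar\tau^\e}{\tBe}\Bigr)^{d-1}\Bigl[1-\frac{d-1}{\tBe}\frac{\chi_\e^{(d-1)}}{\chi_\e^{(d)}}+\Oc_\Pp(\tBe^{-2})\Bigr],
\]
i.e.\ $R=\e^\alpha e^{\lambda\bar\tau^\e}(1+\bar\tau^\e/\tBe)^{d-1}(1+\Oc_\Pp(\tBe^{-1}))$. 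Solving for $e^{\lambda\bar\tau^\e}$ and taking logarithms, I would substitute the expansion \eqref{eq:asymp-exit-time}--\eqref{eq:e-K-eps} for $\tBe$ (in particular $\tBe=\frac{1-\alpha}\lambda\log\e^{-1}(1+o_\Pp(1))$ and the finer form of $G_\e(\alpha)$) into $(1+\bar\tau^\e/\tBe)^{d-1}$, defining $\bar K(\e)$ by \eqref{eq:asympt_of_bar_tau}. The ratio $\bar\tau^\e/\tBe$ equals $\frac{\alpha}{1-\alpha}$ to leading order, and one more term in its expansion — coming from the $\log\log$, $\log R$, and $\log|\chi_\e^{(d)}|$ corrections to both $\bar\tau^\e$ and $\tBe$ — produces, via $(1+x)^{d-1}=1+(d-1)x+\Oc(x^2)$ and $\log(1+x)=x+\Oc(x^2)$ from \eqref{eq:elementary-formula}, exactly the five explicit terms in \eqref{eq:final-expr-for-Kbar}. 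This is a bookkeeping computation: write $\bar\tau^\e=\frac\alpha\lambda\log\e^{-1}+(\text{const})+\bar K(\e)/\lambda$, plug into $\bar\tau^\e/\tBe$, expand, and collect the $1/\log\e^{-1}$ coefficient; \eqref{eq:barK-o1} is then immediate since every displayed term is $o_\Pp(1)$.

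The main obstacle is the self-referential nature of the equation for $\bar\tau^\e$: it appears both explicitly and inside $(1+\bar\tau^\e/\tBe)^{d-1}$, and $\tBe$ itself carries an $o_\Pp(1)$ error that must be tracked at the $1/\log\e^{-1}$ scale. The remedy, exactly as in Proposition~\ref{prop:tau-B-eps-asymp}, is a two-pass argument: first show $\bar K(\e)=o_\Pp(1)$ using only $\tBe=\frac{1-\alpha}\lambda\log\e^{-1}(1+o_\Pp(1))$ and \eqref{eq:chi-converges}, obtaining the crude $\bar\tau^\e=\frac\alpha\lambda\log\e^{-1}(1+o_\Pp(1))$; then feed this back, together with the refined expansion of $\tBe$, to compute the $1/\log\e^{-1}$-order term precisely. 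Care is needed that the $\Oc_\Pp(\tBe^{-2})=\Oc_\Pp((\log\e^{-1})^{-2})$ errors and the $\Oc_\Pp(\e)$ error from $g_\e$ are genuinely negligible at the $1/\log\e^{-1}$ scale, which they are, and that the bracket correction $1-\frac{d-1}{\tBe}\frac{\chi_\e^{(d-1)}}{\chi_\e^{(d)}}$ contributes at order $1/\log\e^{-1}$ — this is the source of the last, $\chi_\e^{(d-1)}/\chi_\e^{(d)}$, term in \eqref{eq:final-expr-for-Kbar} after it is carried through the ratio $\bar\tau^\e/\tBe$ and the power $d-1$.
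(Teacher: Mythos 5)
Your proposal follows essentially the same route as the paper: the same uniform ratio bound from Lemma~\ref{lem:Y-bar-i-asymp-t} for \eqref{eq:exit-in-the-right-direction} and \eqref{eq:following-deterministic-traj}, the same scalar equation $|\bar Y_\e^{(1)}(\bar\tau^\e)|=R$, and the same two-pass bootstrap (first $\bar K(\e)=o_\Pp(1)$, then feed back the refined $\tBe$-expansion) that the paper uses via \eqref{eq:tau-bar-eq}--\eqref{eq:total-exit-time-representation}. One concrete slip: in your displayed equation for $R$ you quote the $i=1$ correction bracket of Lemma~\ref{lem:Y-bar-i-asymp-t} as $1-\frac{d-1}{\tBe}\frac{\chi_\e^{(d-1)}}{\chi_\e^{(d)}}$, dropping the factor $\frac{\bar\tau^\e}{\tBe+\bar\tau^\e}$, which converges to $\alpha$ and is precisely the origin of the $\alpha$ in the coefficient $\frac{\alpha(d-1)\lambda}{(1-\alpha)\log\e^{-1}}$ of the last explicit term of \eqref{eq:final-expr-for-Kbar}; carried through literally, your bookkeeping would produce that term without the factor $\alpha$. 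Restoring the factor (as your closing remark about ``carrying it through the ratio'' gestures at, but does not actually do) makes the computation match the paper's.
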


\begin{proof}
It immediately follows from Lemma \ref{lem:Y-bar-i-asymp-t} that
\[
\sup_{t\leq\bar{\tau}^{\e}}\frac{\bar{Y}_\e^{(i)}(t)}{\bar{Y}_\e^{(1)}(t)}\stackrel{\Pp}{\to} 0,\qquad \e\downarrow 0.
\]
which proves~\eqref{eq:exit-in-the-right-direction}.

By the definition of $\bar{\tau}^{\e}$, we have $|Y^{(1)}_\e(\bar{\tau}^{\e})|=R$ with probability close to one and thus we can use Lemma \ref{lem:Y-bar-i-asymp-t} for $i=1$  to obtain
\begin{equation}\label{eq:tau-bar-eq}
R=\e^{\alpha}e^{\lambda\bar{\tau}^\e}\left(\frac{\tBe+\bar{\tau}^{\e}}{\tBe}\right)^{d-1}
\left|1-\frac{d-1}{\tBe}\frac{\chi_\e^{(d-1)}}{\chi_{\e}^{(d)}}\frac{\bar{\tau}^{\e}}{\tBe+\bar{\tau}^{\e}}+\mathcal{O}_{\Pp}\left(\tBe^{d-2}\right)\right|.
\end{equation}
We define $\bar K(\eps)$ by \eqref{eq:asympt_of_bar_tau} which is equivalent to 
\begin{equation}
\label{eq:seek-solution-for-bar-tau}
\lambda\bar{\tau}^{\e}=\log\frac{R}{\e^{\alpha}}+(d-1)\log(1-\alpha)+\bar{K}(\e)\quad\textrm{\ or}\quad e^{\lambda\bar{\tau}^{\e}}=\e^{-\alpha}R(1-\alpha)^{d-1}e^{\bar{K}(\e)}.
\end{equation}
Using this along with \eqref{eq:asymp-exit-time},\eqref{eq:K-eps-o1},
 we obtain
\begin{equation}
\lambda(\tBe+\bar{\tau}^\e)=\log\e^{-1}-(d-1)\log\log\e^{-1}-\log\frac{|\chi_\e^{(d)}|}{R(d-1)!\lambda^{d-1}}+o_{\Pp}(1)+\bar K(\eps).
\label{eq:sum-of-taus}
\end{equation}
Plugging~\eqref{eq:seek-solution-for-bar-tau} and~\eqref{eq:sum-of-taus} into \eqref{eq:tau-bar-eq}, using $\chi_{\e}\stackrel{\mathrm{d}}{\to}\chi$ and \eqref{eq:asymp-exit-time},\eqref{eq:K-eps-o1}, we obtain
\begin{equation*}
1=e^{\bar{K}(\e)}\left(1+o_\Pp(1)+\frac{\bar K(\eps)}{\ln\eps^{-1}}(1+o_{\Pp}(1))\right).
\end{equation*}
Therefore,~\eqref{eq:barK-o1} holds. Using it in~\eqref{eq:sum-of-taus} gives
\begin{equation}
\label{eq:total-exit-time-representation}
\lambda(\tBe+\bar{\tau}^\e)=\log\e^{-1}-(d-1)\log\log\e^{-1}-\log\frac{|\chi_\e^{(d)}|}{R(d-1)!\lambda^{d-1}}+o_{\Pp}(1).
\end{equation}
Now 
a straightforward calculation based on \eqref{eq:asymp-exit-time}, \eqref{eq:tau-bar-eq}, \eqref{eq:seek-solution-for-bar-tau}, \eqref{eq:total-exit-time-representation}, and \eqref{eq:elementary-formula} reveals
\begin{multline*}
\ 1=e^{\bar{K}(\e)}\Bigg[1+\frac{\alpha}{1-\alpha}\frac{(d-1)^2\log\log\e^{-1}}{\log\e^{-1}}+(d-1)\frac{\log R}{\log\e^{-1}}
+\frac{\alpha}{1-\alpha}\frac{(d-1)\log\left[\frac{|\chi_\e^{(d)}|}{(d-1)!\lambda^{d-1}}\right]}{\log\e^{-1}}\\ +\frac{(d-1)^2}{1-\alpha}\frac{\log(1-\alpha)}{\log\e^{-1}}-\frac{\alpha}{1-\alpha}\frac{(d-1)\lambda}{\log\e^{-1}}\frac{\chi_\e^{(d-1)}}{\chi_\e^{(d)}}+o_{\Pp}\left(\frac{1}{\log\e^{-1}}\right)\Bigg].
\end{multline*}
Another application of \eqref{eq:elementary-formula} finishes the proof of~\eqref{eq:final-expr-for-Kbar}. 

To prove~\eqref{eq:following-deterministic-traj}, we note that Lemma \ref{lem:Y-bar-i-asymp-t} and the result on $\bar{\tau}^{\e}$ implies
\[
\bar{Y}_{\e}^{(1)}(t)=\e^{\alpha}e^{\lambda t}\sgn(\chi_\e^{(d)}) \left(1+\frac{t}{\tBe}\right)^{d-1}\left(1+o_{\Pp}(1)\right),\qquad\sup_{t\leq\bar{\tau}^{\e}}\bar{Y}_\e^{(i)}(t)=o_{\Pp}(1),\qquad i=2,\dots,d.
\] 
Combining this with \eqref{eq:asymp-for-tilde-tau}, we complete the proof.
\end{proof}
\\

We are ready to finish the proof of the result on the linear system exiting from the box $\Vmf$.

\smallskip

\noindent\textit{Proof of Theorem \ref{thm:linear-result}}.
Clearly, $\tau_{\Vmf}^{\e}=\tBe+\bar{\tau}^{\e}$ and as a simple consequence of Proposition \ref{prop:tau-B-eps-asymp} and Proposition \ref{prop:tau-bar-one-asymp}, we have
\begin{equation}\label{eq:tau-B-e-full-asymp}
\tau_{\Vmf}^\e=\frac{1}{\lambda}\log\e^{-1}-\frac{d-1}{\lambda}\log\log\e^{-1}-\frac{1}{\lambda}\log\frac{|\chi_\e^{(d)}|}{R(d-1)!\lambda^{d-1}}+\frac{K(\e)}{\lambda},
\end{equation}
where
\[
K(\e)=\frac{(d-1)^2\log\log\e^{-1}}{\log\e^{-1}}+\frac{d-1}{\log\e^{-1}}\eta_{\e}+o_{\Pp}\left(\frac{1}{\log\e^{-1}}\right),\quad{\ } \eta_{\e}=-\lambda\frac{\chi_\e^{(d-1)}}{\chi_{\e}^{(d)}}+\log\left[\frac{|\chi_\e^{(d)}|}{R(d-1)!\lambda^{d-1}}\right].
\]

Combining \eqref{eq:tau-bar-eq} with Lemma \ref{lem:Y-bar-i-asymp-t}, we obtain
\begin{align*}
Y_\e^{(i)}(\tau_\Vmf^\e)&=\frac{R\sgn(\chi_\e^{(d)})}{\tBe^{i-1}}\left(1+\frac{\bar{\tau}^\e}{\tBe}\right)^{-(i-1)}\frac{(d-1)!}{(d-i)!}\frac{1-\frac{1}{\tBe}\frac{\chi_\e^{(d-1)}}{\chi_{\e}^{(d)}}\frac{(i-1)\tBe+(d-1)\bar{\tau}^\e}{\tBe+\bar{\tau}^\e}+\Oc_{\Pp}\left(\tBe^{-2}\right)}{1-\frac{1}{\tBe}\frac{\chi_\e^{(d-1)}}{\chi_{\e}^{(d)}}\frac{(d-1)\bar{\tau}^{\e}}{\tBe+\bar{\tau}^{\e}}+\Oc_{\Pp}\left(\tBe^{-2}\right)}
\\
&=
\frac{R\sgn(\chi_\e^{(d)})}{\left[\tau_\Vmf^\e\right]^{i-1}}\frac{(d-1)!}{(d-i)!}\left[1-\frac{i-1}{\tau_\Vmf^\e}\frac{\chi_\e^{(d-1)}}{\chi_{\e}^{(d)}}+\Oc_{\Pp}\left(\tBe^{-2}\right)\right],
\end{align*}
where we used \eqref{eq:elementary-formula} in the second equality. A straightforward calculation using the asymptotics of $\tau_{\Vmf}^\e$ and \eqref{eq:elementary-formula} implies

\begin{equation}\label{eq:precise-formula-in-all-directions}
Y_\e^{(i)}(\tau_\Vmf^\e)=\frac{\lambda^{i-1}R\sgn(\chi_\e^{(d)})}{\log^{i-1}\e^{-1}}\frac{(d-1)!}{(d-i)!}\left[1+\frac{i-1}{\log\e^{-1}}\left((d-1)\log\log\e^{-1}+\eta_\e\right)+o_{\Pp}\left(\frac{1}{\log\e^{-1}}\right)\right].
\end{equation}
Using this identity for $i=1,2,3$, we obtain
\begin{multline}
\label{eq:asymptotics-of-exit-from-small-in-proof}
Y_\e(\tau_\Vmf^\e)=R\sgn(\chi_\e^{(d)})\Biggl[e_1+
\lambda(d-1)\left(\frac{1}{\log \eps^{-1}}
+\frac{(d-1) \log\log\e^{-1}}{\log^2\e^{-1}}
-\frac{\eta_\eps}{\log^2\e^{-1}}\right)e_2
\\+\frac{\lambda^2(d-1)(d-2)}{\log^2\eps^{-1}}e_3+o_\Pp\left(\frac{1}{\log^2\eps^{-1}}\right)\Bigg].
\end{multline}

Due to~\eqref{eq:chi-converges} and $\Pp\{\chi^{(d)}=0\}=0$, we can conclude that
$\Pp\{\sgn(\chi_\eps^{(d)})=\sgn(\chi^{(d)})\}\to 1$ as $\eps\to 0$. Using this, we see that on 
$A^{\pm}=\{\pm\chi^{(d)}>0\}$,
the expansions~\eqref{eq:tau-B-e-asymp-in-theorem}
and~\eqref{eq:asymptotics-of-exit-from-small} of the theorem 
follow 
from \eqref{eq:tau-B-e-full-asymp},\eqref{eq:asymptotics-of-exit-from-small-in-proof}, and 
\begin{equation*}
\left(\chi_{\e}^{(d-1)},\chi_\e^{(d)},\sgn(\chi_\e^{(d)}), \eta_{\e}\right)\stackrel{\Pp}{\to}\left(\chi^{(d-1)},\chi^{(d)},\sgn(\chi^{(d)}), \eta\right),\quad \e\downarrow 0.
\end{equation*}
Also,~\eqref{eq:motion-is-close-to-axis} follows from Proposition~\ref{prop:tau-bar-one-asymp}. 
\qed

\bibliographystyle{Martin}
\bibliography{citations} 

\begin{thebibliography}{ASK03}
\expandafter\ifx\csname url\endcsname\relax
  \def\url#1{\texttt{#1}}\fi
\expandafter\ifx\csname urlprefix\endcsname\relax\def\urlprefix{URL }\fi
\expandafter\ifx\csname href\endcsname\relax
  \def\href#1#2{#2}\fi
\expandafter\ifx\csname burlalt\endcsname\relax
  \def\burlalt#1#2{\href{#2}{\texttt{#1}}}\fi

\bibitem[AB11a]{AB2011}
\textsc{S.~Almada} and \textsc{Y.~Bakhtin}.
\newblock Normal forms approach to diffusion near hyperbolic equilibria.
\newblock \emph{Nonlinearity} \textbf{24}, (2011), 1883--1907.

\bibitem[AB11b]{AB2011a}
\textsc{S.~Almada} and \textsc{Y.~Bakhtin}.
\newblock Scaling limit for the diffusion exit problem in the {L}evinson case.
\newblock \emph{Stochastic Processes and their Applications} \textbf{121},
  no.~1, (2011), 24--37.

\bibitem[ASK03]{ASK03}
\textsc{D.~Armbruster}, \textsc{E.~Stone}, and \textsc{V.~Kirk}.
\newblock Noisy heteroclinic networks.
\newblock \emph{Chaos (Woodbury, N.Y.)} \textbf{13}, no.~1, (2003), 71--79.
\newblock \burlalt{doi:10.1063/1.1539951}{http://dx.doi.org/10.1063/1.1539951}.

\bibitem[Bak08]{Bak2008}
\textsc{Y.~Bakhtin}.
\newblock Exit asymptotics for small diffusion about an unstable equilibrium.
\newblock \emph{Stochastic Processes and their Applications} \textbf{118},
  no.~5, (2008), 839--851.

\bibitem[Bak10]{Bak2010}
\textsc{Y.~Bakhtin}.
\newblock Small noise limit for diffusions near heteroclinic networks.
\newblock \emph{Dynamical Systems} \textbf{25}, no.~3, (2010), 413--431.

\bibitem[Bak11]{Bak2011}
\textsc{Y.~Bakhtin}.
\newblock Noisy heteroclinic networks.
\newblock \emph{Probab. Theory Relat. Fields} \textbf{150}, (2011), 1--42.

\bibitem[But03]{Buterakos}
\textsc{L.~A. Buterakos}.
\newblock \emph{The exit time distribution for small random perturbations of
  dynamical systems with a repulsive type stationary point.}
\newblock Ph.D. thesis, Virginia Polytechnic Institute and State University,
  2003.

\bibitem[Day95]{Day95}
\textsc{M.~V. Day}.
\newblock On the exit law from saddle points.
\newblock \emph{Stochastic Process. Appl.} \textbf{60}, no.~2, (1995),
  287--311.
\newblock
  \burlalt{doi:10.1016/0304-4149(95)00063-1}{http://dx.doi.org/10.1016/0304-4149(95)00063-1}.

\bibitem[Eiz84]{Eizenberg:MR749377}
\textsc{A.~Eizenberg}.
\newblock The exit distributions for small random perturbations of dynamical
  systems with a repulsive type stationary point.
\newblock \emph{Stochastics} \textbf{12}, no. 3-4, (1984), 251--275.
\newblock
  \burlalt{doi:10.1080/17442508408833304}{http://dx.doi.org/10.1080/17442508408833304}.

\bibitem[Fie15]{Field2015}
\textsc{M.~J. Field}.
\newblock Heteroclinic networks in homogeneous and heterogeneous identical cell
  systems.
\newblock \emph{Journal of Nonlinear Science} \textbf{25}, no.~3, (2015),
  779--813.
\newblock
  \burlalt{doi:10.1007/s00332-015-9241-1}{http://dx.doi.org/10.1007/s00332-015-9241-1}.

\bibitem[FW12]{FW2012}
\textsc{M.~Freidlin} and \textsc{A.~Wentzell}.
\newblock \emph{Random Perturbations of Dynamical Systems}.
\newblock Grundlehren der mathematischen Wissenschaften. Springer, 2012.

\bibitem[Kif81]{Kifer1981}
\textsc{Y.~Kifer}.
\newblock The exit problem for small random perturbation of dynamical systems
  with a hyperbolic fixed point.
\newblock \emph{Israel J. Math.} \textbf{40}, no.~1, (1981), 74--96.

\bibitem[KS91]{KS1991}
\textsc{I.~Karatzas} and \textsc{S.~Shreve}.
\newblock \emph{Brownian Motion and Stochastic Calculus}.
\newblock Graduate Texts in Mathematics. Springer New York, 1991.

\bibitem[MIK95]{Mikami1995}
\textsc{T.~MIKAMI}.
\newblock Large deviations for the first exit time on small random
  perturbations of dynamical systems with a hyperbolic equilibrium point.
\newblock \emph{Hokkaido Math. J.} \textbf{24}, no.~3, (1995), 491--525.
\newblock
  \burlalt{doi:10.14492/hokmj/1380892606}{http://dx.doi.org/10.14492/hokmj/1380892606}.

\bibitem[SA99]{AS1999}
\textsc{E.~Stone} and \textsc{D.~Armbruster}.
\newblock Noise and o(1) amplitude effects on heteroclinic cycles.
\newblock \emph{Chaos: An Interdisciplinary Journal of Nonlinear Science}
  \textbf{9}, no.~2, (1999), 499--506.
\newblock
  \burlalt{arXiv:http://dx.doi.org/10.1063/1.166423}{http://arxiv.org/abs/http://dx.doi.org/10.1063/1.166423}.
\newblock \burlalt{doi:10.1063/1.166423}{http://dx.doi.org/10.1063/1.166423}.

\bibitem[SH90]{SH1990}
\textsc{E.~Stone} and \textsc{P.~Holmes}.
\newblock Random perturbations of heteroclinic attractors.
\newblock \emph{SIAM Journal on Applied Mathematics} \textbf{50}, no.~3,
  (1990), 726--743.

\end{thebibliography}
\end{document}